\newtheorem{theorem}{Theorem}[section]
\newtheorem{corollary}[theorem]{Corollary}
\newtheorem{lemma}[theorem]{Lemma}
\newtheorem{proposition}[theorem]{Proposition}
\theoremstyle{definition}
\newtheorem{definition}[theorem]{Definition}
\newtheorem{example}[theorem]{Example}
\newtheorem{remark}[theorem]{Remark}
\newcommand{\C}{\mathcal{C}}
\renewcommand{\c}{\mathrm{c}}
\renewcommand{\d}{\mathrm{d}}
\newcommand{\m}{\textrm{-}}
\newcommand{\f}{\bm{f}}
\renewcommand{\i}{\bm{i}}
\newcommand{\s}{\bm{s}}
\renewcommand{\S}{\mathbb{\mathcal{S}}}
\renewcommand{\H}{\mathcal{H}}
\newcommand{\G}{\mathcal{G}}
\newcommand{\D}{\mathcal{D}}
\newcommand{\id}{\mathrm{id}}
\newcommand{\Id}{\mathrm{Id}}
\newcommand{\oM}{\overline{M}_S}
\newcommand{\Ob}{\mathrm{Ob}}
\newcommand{\op}{\mathrm{op}}
\newcommand{\obc}{\mathrm{Ob}\,\mathcal{C}}
\newcommand{\obd}{\mathrm{Ob}\,\mathcal{D}}
\newcommand{\Alg}{\mathrm{Alg}}
\newcommand{\Hom}{\mathrm{Hom}}
\newcommand{\Ind}{\mathrm{IndT}}
\newcommand{\IndP}{\mathrm{IndP}}
\newcommand{\Mor}{\mathrm{Mor}\,}
\newcommand{\@ndereq}[2]{%
  \vtop{
    \lineskiplimit\maxdimen
    \lineskip-.5\p@
    \ialign{$\m@th#1\hfil##\hfil$\crcr\sim\crcr#2\crcr}%
  }%
}
\newcommand{\sims}{\mathrel{\mathpalette\@ndereq{\scriptstyle \s}\relax}}
\newcommand{\titlename}	
{Skew category algebras, twisted tensor product algebras and induction of precosheaves}
\newcommand{\shorttitlename}
{Skew category algebras and induction}
\newcommand{\authorname}      {Tiberiu Coconeț$^1$, Virgilius-Aurelian Minuță$^2$ \and Constantin-Cosmin Todea$^3$}
\newcommand{\pdfauthorname}   {Tiberiu Coconet, Virgilius-Aurelian Minuta, Constantin-Cosmin Todea}
\newcommand{\shortauthorname} {T. Coconeț, V. A. Minuță \and C.-C. Todea}
\newcommand{\universitynameA}  {$^{1}$Babeș-Bolyai University}
\newcommand{\facultynameA}     {Faculty of Economics and Business Administration}
\newcommand{\departmentnameA}  {}
\newcommand{\addressA}  	   {T. Mihali 58-60, RO-400591, Cluj-Napoca, Romania}
\newcommand{\universitynameB}  {$^{2}$Babeș-Bolyai University}
\newcommand{\facultynameB}     {Faculty of Mathematics and Computer Science}
\newcommand{\departmentnameB}  {Department of Mathematics}
\newcommand{\addressB}  	   {M. Kogălniceanu 1, RO-400084, Cluj-Napoca, Romania}
\newcommand{\universitynameC}  {$^{3}$Technical University of Cluj-Napoca}
\newcommand{\facultynameC}     {}
\newcommand{\departmentnameC}  {Department of Mathematics}
\newcommand{\addressC}  	   {G. Barițiu 25, RO-400027, Cluj-Napoca, Romania}
\newcommand{\emailaddressA}    {tiberiu.coconet@math.ubbcluj.ro}
\newcommand{\emailaddressB}    {virgilius.minuta@ubbcluj.ro}
\newcommand{\emailaddressC}    {constantin.todea@math.utcluj.ro}
\newcommand{\articleabstract}{We show that a skew category algebra can be embedded into a twisted tensor product algebra. We investigate the extension of some concepts of Puig and Turull from group algebras to category algebras and their behavior with respect to skew category algebras.}
\newcommand{\msc}{16SXX, 18AXX, 18A22}
\newcommand{\keywordterms}{Category, Skew, Algebra, Twisted, Induction, Precosheaf}
\def\depA{\departmentnameA}
\StrLen{\depA}[\depAlen]
\def\depB{\departmentnameB}
\StrLen{\depB}[\depBlen]
\def\depC{\departmentnameC}
\StrLen{\depC}[\depClen]
\def\facA{\facultynameA}
\StrLen{\facA}[\facAlen]
\def\facB{\facultynameB}
\StrLen{\facB}[\facBlen]
\def\facC{\facultynameC}
\StrLen{\facC}[\facClen]
\newcommand{\institutionA}{
\universitynameA\\
\ifthenelse{\facAlen>0}{\facultynameA\\}{}
\ifthenelse{\depAlen>0}{\departmentnameA\\}{}
\addressA}
\newcommand{\institutionB}{
\universitynameB\\
\ifthenelse{\facBlen>0}{\facultynameB\\}{}
\ifthenelse{\depBlen>0}{\departmentnameB\\}{}
\addressB}
\newcommand{\institutionC}{
\universitynameC\\
\ifthenelse{\facClen>0}{\facultynameC\\}{}
\ifthenelse{\depClen>0}{\departmentnameC\\}{}
\addressC}
\titleformat{\section}{\Large\bfseries}{\thesection}{1em}{}
\titleformat{\subsection}{\large\it}{\thesubsection}{1em}{}
\title[\shorttitlename]{\LARGE{\titlename}}
\author[\shortauthorname]{\large{\authorname}
\medskip\\
{\footnotesize \institutionA\medskip\\\institutionB\medskip\\\institutionC\medskip\\
$\begin{array}{l}
\text{email}^1\text{: \texttt{\href{mailto:\emailaddressA}{\emailaddressA}}}\\
\text{email}^2\text{: \texttt{\href{mailto:\emailaddressB}{\emailaddressB}}}\\
\text{email}^3\text{: \texttt{\href{mailto:\emailaddressC}{\emailaddressC}}}
\end{array}$
}}
\begin{document}
\begin{abstract}
\articleabstract\\[0.1cm]
\textsc{MSC 2010.} \msc\\[0.1cm]
\textsc{Key words.} \keywordterms
\end{abstract}
\begingroup
\def\uppercasenonmath#1{} 
\let\MakeUppercase\relax 
\maketitle
\endgroup
\thispagestyle{firstpagestyle}


\section{Introduction} \label{sec1}
Let $\mathcal{C}$ be a finite category (the class of morphisms $\Mor \C$ is a finite set) and $k$ be a field. Even though  some of the results are true for small categories with finitely many objects and over unital commutative rings, for our purpose, we prefer working with finite categories and fields.  If $f:x\to y$ is a morphism in $\mathcal{C}$, we denote by $\mathrm{d}(f)$ and $\mathrm{c}(f)$ the domain $x$ and the codomain $y$, respectively. By a $k$-algebra we mean a unital associative $k$-algebra and we assume that a $k$-subalgebra  of  a larger $k$-algebra has the same identity, if otherwise not stated. Let $k\textrm{-}\Alg$ be the category of unital associative $k$-algebras with unital $k$-algebra homomorphisms and $k\textrm{-}\mathrm{Vect}$ be the category of $k$-vector spaces. The identity morphism of an object $x\in\obc$ is denoted by $1_x.$ The reader should understand the difference between
$1_x, x\in\obc$ and the identity $1_A$ of a $k$-algebra $A;$ the identity $1_k$ of $k$ is denoted simply $1.$

The \textit{category algebra} $k\C$ (also named the category convolution algebra) is the $k$-vector space with a basis the set $\Mor \C.$ The multiplication is given by
 \[g\cdot f=\left\{\begin{array}{ll}
g\circ f,& \text{if } \mathrm{d}(g)=\mathrm{c}(f)\\
0,& \text{otherwise,}
\end{array}\right.\]
for any $f,g\in\Mor \C.$
The identity element of this algebra is $1_{k\C}=\sum_{x\in\obc}1_x.$ By $(k\textrm{-}\mathrm{Vect})^{\C}$ we denote the category of covariant functors from $\C$ to the category $k\textrm{-}\mathrm{Vect}$ of finite dimensional $k$-vector spaces. A well-known result of Mitchell (see \cite{art:Mitchell1972}) states that, since $\C$ has finitely many objects then the category $(k\textrm{-}\mathrm{Vect})^{\C}$ is equivalent to the category $k\C\textrm{-}\mathrm{mod},$ of finitely generated $k\C$-modules. We recall the explicit identification between $k\C\textrm{-}\mathrm{mod}$ and $(k\textrm{-}\mathrm{Vect})^{\C}$ in Section \ref{subsec:31}.

\goodbreak
By a \textit{precosheaf} of $k$-algebras on $\C$  we understand  a covariant functor $R:\C\to k\textrm{-}\Alg$. Skew category algebras are constructions that generalize skew group algebras in a slightly different manner than smash products. The skew category algebra $R[\C]$ on $\C$ with respect to $R$ (that will be explicitly presented in Section \ref{subsec:21}) becomes, when $\C$ is a poset, the construction (denoted by $R!$) obtained by Gerstenhaber and Schack \cite{art:Gers1988}. Historically, the first appearance of skew category algebras seems to be in \cite{OiLu} as a particular case of the so-called ``category crossed product''. There is also a more recent interest for these constructions in two papers of V. V. Bavula \cite{Bav1,Bav2}, but also of M. Wu and F. Xu \cite{art:Wu2022}.

\textit{Twisted tensor product algebras} were introduced by \v{C}ap, Schichl  and Van\v{z}ura in \cite[Definition 2.1.]{art:Cap1995} motivated by some unanswered questions  in non-commutative geometry. If $(A,\mu_A), (B,\mu_B)$ are two $k$-algebras then a $k$-linear map $\tau:B\otimes A\rightarrow A\otimes B$ is called a twisting map \cite[Proposition/Definition 2.3.]{art:Cap1995}  if 
\[\tau (b\otimes 1_A)=1_A\otimes b,\qquad  \tau (1_B\otimes a)=a\otimes 1_B\] 
and
\[\tau\circ(\mu_B\otimes\mu_A)=(\mu_A\otimes\mu_B)\circ (\mathrm{id}_A\otimes \tau\otimes \mathrm{id}_B)\circ (\tau\otimes\tau)\circ(\mathrm{id}_B\otimes\tau\otimes\mathrm{id}_A),\]
for any $a\in A,b\in B,$ where $\mathrm{id}_A,\mathrm{id}_B$ are the identity maps. If $\tau$ is a twisting map then $(A\otimes B, \mu_{\tau})$ is called a twisted tensor product algebra, where $$\mu_{\tau}:=(\mu_A\otimes\mu_B)\circ (\mathrm{id}_A\otimes \tau\otimes \mathrm{id}_B).$$ 

In Proposition \ref{prop:2_1} we construct a twisting map
\[\tau:k\C\otimes \left(\bigotimes_{x\in \obc}R(x)\right)\to \left(\bigotimes_{x\in \obc}R(x)\right)\otimes k\C\] hence, by Corollary \ref{cor:2.2}, there is a structure of twisted tensor product algebra 
$$\left(\left(\bigotimes\limits_{x\in\obc}R(x)\right)
\otimes k\C,\mu_{\tau}\right).$$ 

In our first main result we will show that any skew group algebra can be embedded into a twisted tensor product algebra, defined as above.
\begin{theorem}\label{thm:11} There is an injective homomorphism of  $k$-algebras
\[\Psi:R[\C]\to\left(\bigotimes_{x\in\obc}R(x)\right)\otimes_{\tau}k\C,\]
from the skew category algebra into the above twisted tensor product algebra.
\end{theorem}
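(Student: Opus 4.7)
The plan is to define $\Psi$ on a basis of $R[\C]$ and then verify $k$-linearity, unit-preservation, multiplicativity and injectivity in that order, treating the twisting map $\tau$ from Proposition \ref{prop:2_1} as a black box. Write $A=\bigotimes_{x\in\obc}R(x)$ and, for each object $y\in\obc$, let $\iota_y\colon R(y)\to A$ denote the canonical unital $k$-algebra inclusion that places its argument in the $y$-th tensor factor and the identity $1_{R(z)}$ in every other factor. The natural candidate for $\Psi$ is
$$\Psi(r\otimes f)=\iota_{\c(f)}(r)\otimes f,$$
defined for a morphism $f\in\Mor\C$ and an element $r\in R(\c(f))$, extended $k$-linearly to all of $R[\C]$.

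Preservation of the identity is then immediate: the unit of $R[\C]$ is $\sum_{x\in\obc}1_{R(x)}\otimes 1_x$, and each summand is sent by $\Psi$ to $\iota_x(1_{R(x)})\otimes 1_x=\bigl(\bigotimes_{z\in\obc}1_{R(z)}\bigr)\otimes 1_x$, whose sum over $x$ is $1_A\otimes 1_{k\C}$, the unit of $A\otimes_\tau k\C$.

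The crucial step is multiplicativity on basis elements. For $f,g\in\Mor\C$ with $r\in R(\c(f))$ and $s\in R(\c(g))$, the skew-category product $(s\otimes g)(r\otimes f)$ vanishes when $\d(g)\neq\c(f)$ and equals $s\cdot R(g)(r)\otimes g\circ f$ otherwise. Computing the corresponding product on the twisted tensor side via $\mu_\tau=(\mu_A\otimes\mu_{k\C})\circ(\id\otimes\tau\otimes\id)$, the whole calculation reduces to evaluating $\tau\bigl(g\otimes\iota_{\c(f)}(r)\bigr)$. The content I would extract from Proposition \ref{prop:2_1} is that the twisting map has been constructed precisely so that
$$\tau\bigl(g\otimes\iota_{\c(f)}(r)\bigr)=\iota_{\c(g)}\bigl(R(g)(r)\bigr)\otimes g$$
whenever $\d(g)=\c(f)$. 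Granting this, the remaining multiplication produces $\iota_{\c(g)}(s)\cdot\iota_{\c(g)}\bigl(R(g)(r)\bigr)=\iota_{\c(g)}\bigl(s\cdot R(g)(r)\bigr)$ in $A$ (both factors are supported in the same $\c(g)$-slot) and $g\cdot f=g\circ f$ in $k\C$, which matches $\Psi$ applied to the skew-category product. The composability mismatch $\d(g)\neq\c(f)$ must be tracked carefully so that the right-hand side also vanishes, but this is forced by the $k\C$-factor collapsing $g\cdot f$ to $0$. Extracting from Proposition \ref{prop:2_1} precisely this behaviour of $\tau$ on elements of the form $g\otimes\iota_{\c(f)}(r)$ is the main obstacle; everything else in this step is just combining algebra homomorphism properties of $R(g)$ and the inclusions $\iota_y$.

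For injectivity, note that $\Psi$ carries a $k$-basis of $R[\C]$ (indexed by pairs $(r,f)$ with $r$ in a fixed $k$-basis of $R(\c(f))$) to the set $\{\iota_{\c(f)}(r)\otimes f\}$ inside $A\otimes k\C$. Since distinct morphisms $f$ remain linearly independent in $k\C$ and each $\iota_{\c(f)}$ is $k$-linear and injective, these images are linearly independent, so $\Psi$ is an injective $k$-linear map; combined with the previous two steps, this completes the proof.
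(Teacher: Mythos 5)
Your proposal is correct and follows essentially the same route as the paper: the map $\Psi(rf)=\iota_{\c(f)}(r)\otimes f$ is exactly the paper's $\Psi$, and the identity you extract from Proposition \ref{prop:2_1}, namely $\tau\bigl(g\otimes\iota_{\c(f)}(r)\bigr)=\iota_{\c(g)}\bigl(R(g)(r)\bigr)\otimes g$ when $\d(g)=\c(f)$, is precisely the specialization of the multiplication formula (\ref{eqmult_twisted}) that the paper invokes at the same point of its argument. The only difference is cosmetic and concerns injectivity: you argue via linear independence of the images $\iota_{\c(f)}(r)\otimes f$ in $\bigl(\bigotimes_{x\in\obc}R(x)\bigr)\otimes k\C$, whereas the paper exhibits the explicit left inverse $\Phi\bigl(\bigl(\bigotimes_{x\in\obc}r_x\bigr)\otimes f\bigr)=r_{\c(f)}f$; the two arguments are equivalent.
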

\begin{corollary}
If $\C$ is a one object category (i.e. a monoid) then $\Psi$ becomes an isomorphism of $k$-algebras.
\end{corollary}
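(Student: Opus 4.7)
The plan is to deduce the corollary directly from Theorem \ref{thm:11} via a dimension count. Since Theorem \ref{thm:11} already provides that $\Psi$ is injective, it will suffice to verify that the source and target have the same finite $k$-dimension, from which bijectivity follows automatically for a $k$-linear injection.

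First, I would specialize the construction to $\obc=\{x\}$. The product over objects collapses, so $\bigotimes_{x\in\obc}R(x)=R(x)=:A$ is just a single $k$-algebra. The set $M:=\Mor\C$ is a monoid under composition (with identity $1_x$), and $k\C=kM$ is the monoid algebra, with $\dim_k(k\C)=|M|$. The underlying $k$-vector space of the twisted tensor product $A\otimes_{\tau}kM$ is $A\otimes kM$, so
\[
\dim_k\!\left(A\otimes_{\tau}kM\right)=\dim_k(A)\cdot|M|.
\]

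Next, I would unfold the definition of the skew category algebra $R[\C]$ given in Section \ref{subsec:21}. Its underlying $k$-vector space has a basis naturally indexed by pairs $(a,f)$ where $a$ runs over a $k$-basis of $R(\c(f))$ and $f$ runs over $\Mor\C$. In the one-object case $\c(f)=x$ for every $f\in M$, so this basis collapses to pairs (basis vector of $A$, element of $M$), giving
\[
\dim_k R[\C]=\dim_k(A)\cdot|M|,
\]
which matches the dimension computed above.

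The only real point requiring care is confirming the underlying vector space of $R[\C]$ in the one-object case—essentially noting that the skew category algebra reduces to the familiar skew monoid algebra—after which the corollary is immediate: an injective $k$-linear map between two $k$-vector spaces of the same finite dimension is an isomorphism, and since $\Psi$ is already a homomorphism of $k$-algebras, it is an isomorphism of $k$-algebras.
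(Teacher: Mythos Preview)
Your dimension count hinges on $\dim_k R(x)<\infty$, but the paper only assumes $R:\C\to k\textrm{-}\Alg$ with $k\textrm{-}\Alg$ the category of arbitrary unital associative $k$-algebras; no finite-dimensionality hypothesis is imposed on the values $R(x)$. When $A=R(x)$ is infinite-dimensional, both $R[\C]$ and $A\otimes_\tau k\C$ still have the same (infinite) $k$-dimension, yet an injective linear map between infinite-dimensional spaces of equal dimension need not be surjective. So the step ``injective $+$ equal finite dimension $\Rightarrow$ isomorphism'' is not justified in the generality of the statement.

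The paper's (implicit) argument avoids this entirely. In the proof of Theorem~\ref{thm:11} an explicit $k$-linear left inverse
\[
\Phi\Bigl(\Bigl(\bigotimes_{x\in\obc} r_x\Bigr)\otimes f\Bigr)=r_{\c(f)}f
\]
is exhibited. When $\obc=\{x\}$ this reads $\Phi(r\otimes f)=rf$ while $\Psi(rf)=r\otimes f$, so $\Phi$ is visibly also a right inverse of $\Psi$, and $\Psi$ is bijective without any dimension hypothesis. Your argument is easily repaired in the same spirit: instead of counting global dimensions, observe that $\Psi$ carries each summand $R(x)f$ isomorphically onto $R(x)\otimes kf$ via $rf\mapsto r\otimes f$, which gives surjectivity directly and makes the finiteness of $\dim_k R(x)$ irrelevant.
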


For an $H$-algebra ($H$ is a subgroup of a given group $G$) Turull defined in \cite{art:Tu2006} the induced $G$-algebra which he used for investigating  Clifford classes. On the other hand, Puig introduced the concept of general induction  for interior group-algebras in \cite{art:Puig1981,book:Puig1999}. 
In \cite[Definition 1.1]{art:Linckelmann2002} Linckelmann defines an induction for interior algebras generalizing  Puig's concept of induction. We use the terminology of \cite{art:Kuels2000}, which extends that of \cite[3.1]{art:Puig1981} to the so-called interior $A$-algebras (where $A$ is any $k$-algebra). 

A category algebra is not a bialgebra, see \cite{art:Xu2013}. Since it is only a weak bialgebra, we can not apply the machinery developed in \cite{art:CMT2017} to obtain an isomorphism between two types of inductions involving Hopf algebras (generalizing the group case inductions of Puig and Turull) through smash product, see \cite{art:Coc2009} for the group case. However, if we consider an injective on objects, surjective on morphisms covariant functor from a finite category $\D$ to a finite category $\C$ we are able to construct Puig's  and Turull's induction for category algebras.  

\goodbreak
Starting with Section \ref{sec3} we consider two finite categories $\C,\D$ and  a precosheaf of $k$-algebras on $\D,$ that will be denoted by $S:\D\rightarrow k\textrm{-}\Alg.$ 
We also need an injective on objects, surjective on morphisms covariant functor $\s:\D\to\C.$ 
In Section \ref{sec3} we define surjective Turull's induction of precosheaves of $k$-algebras $\Ind^{\C}_{\s,\D}(S):\C\rightarrow k\textrm{-}\Alg,$
see Definition \ref{def:37} and Proposition \ref{prop:38}.
 
In Section \ref{sec4} we continue with the above functor $\s,$ but we work with interior algebras \cite[Definition 1.1]{art:Linckelmann2002}. Recall that if $A$ is a $k$-algebra, an interior $A$-algebra is a $k$-algebra $C$ with a $k$-algebra homomorphism $\sigma:A\rightarrow C,$  denoted  $(C,\sigma).$ By assuming a condition on $\s$ (see Lemma \ref{lemma:422}, (\ref{eq:423})) we introduce the surjective Puig's induction $\IndP_{\s,\D}^{\C}(C),$ where $C$ is an interior $k\D$-algebra, see Definition \ref{def:44} and Proposition \ref{prop:45}.  We will finalize Section \ref{sec4} with the proof of the following theorem. But first we need some background on category graded algebras. A category graded algebra,  or  a $\C$-graded algebra,  is a $k$-algebra $A$ for which there exist the homogeneous components $A_f,$  $f\in \Mor(\C),$ such that $A=\bigoplus_{f\in \Mor(\C)}A_f$ and $A_f\cdot A_g\subseteq A_{f\circ g},$ for any $f,g\in \Mor(\C)$ such that $\mathrm{d}(f)=\mathrm{c}(g).$ A homomorphism of $\C$-graded algebras is a homomorphism of $k$-algebras that preserves the homogeneous components.

\begin{theorem}\label{thm:13}
Let $S:\D\to k\mathrm{-Alg}$ be a precosheaf of $k$-algebras on $\D$. Let $\s:\D\to\C$ be an injective on objects, surjective on morphisms covariant functor. The following statements hold:
\begin{enumerate}[\normalfont (i)]
\item The skew category algebra $S[\D]$ is an interior $k\D$-algebra. More precisely, there is a homomorphism of $\D$-graded  algebras
\[\sigma:k\D\to S[\D],\quad \sigma(f)=1_{S(\c(f))}f,\]
for any $f\in \Mor \D;$
\item If $\mathrm{(\ref{eq:423})}$ is true, then there is an isomorphism of $\C$-graded interior $k\C$-algebras
\[\IndP^{\C}_{\s,\D}\left(S[\D]\right)\cong\left(\mathrm{IndT}^{\C}_{\s,\D}(S)\right)[\C].\]
\end{enumerate}
\end{theorem}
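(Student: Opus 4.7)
For part (i), the plan is to extend $\sigma$ linearly from $\Mor\D$ to $k\D$ and verify the three standard checks. Unitality follows since $\sigma(1_{k\D})=\sum_{x\in\obd}\sigma(1_x)=\sum_{x\in\obd}1_{S(x)}1_x$, and in the skew category algebra $S[\D]$ the idempotents $1_{S(x)}1_x$ sum to the identity. For multiplicativity, on a pair $f,g\in\Mor\D$ with $\d(g)=\c(f)$ one computes $\sigma(g)\sigma(f)=(1_{S(\c(g))}g)(1_{S(\c(f))}f)$ using the skew multiplication rule of Section~\ref{subsec:21}; since $S(g):S(\c(f))\to S(\c(g))$ is a unital $k$-algebra map, $S(g)(1_{S(\c(f))})=1_{S(\c(g))}$, which collapses the product to $1_{S(\c(g\circ f))}(g\circ f)=\sigma(g\circ f)$. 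For non-composable pairs both sides are zero. Finally, preservation of the $\D$-grading is immediate from the formula, since the homogeneous component of $S[\D]$ at $f$ is $S(\c(f))\cdot f$ and $\sigma(f)$ lies in it.

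For part (ii), the strategy is to build an explicit bijective homomorphism $\Phi$ on homogeneous components and then check it is multiplicative and compatible with the two interior structures. Using that $\s$ is a bijection on objects (which follows from injectivity on objects plus the fact that surjectivity on morphisms forces surjectivity on objects via identities), each $x\in\obc$ has a unique preimage $y\in\obd$, so $\Ind^{\C}_{\s,\D}(S)(x)$ can be described in terms of $S(y)$; on morphisms $g\in\Mor\C$ with $\s^{-1}(g)=\{f_1,\dots,f_n\}\subseteq\Mor\D$, the $\C$-graded component of $(\Ind^{\C}_{\s,\D}(S))[\C]$ at $g$ becomes $\Ind^{\C}_{\s,\D}(S)(\c(g))\cdot g$. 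On the Puig side, $\IndP^{\C}_{\s,\D}(S[\D])$ has a description in terms of the homogeneous pieces $S[\D]_{f_i}=S(\c(f_i))\cdot f_i$ assembled over the fiber $\s^{-1}(g)$, per Definition~\ref{def:44}.

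The definition of $\Phi$ should therefore be component-by-component, matching, for each $g\in\Mor\C$, the pieces indexed by $\s^{-1}(g)$ on the Puig side with the corresponding coordinates of $\Ind^{\C}_{\s,\D}(S)(\c(g))$ on the Turull side, the identification being essentially the identity on the $S$-factors and the obvious relabelling $f_i\mapsto g$ on the grading labels. By construction $\Phi$ is a $k$-linear isomorphism preserving the $\C$-grading. Compatibility with the interior structures $k\C\to\IndP^{\C}_{\s,\D}(S[\D])$ and $k\C\to(\Ind^{\C}_{\s,\D}(S))[\C]$ is checked on each $1_x$ and each $g\in\Mor\C$ by tracing their images through the inductions and using (i) applied to $S[\D]$ at the $\D$-level.

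The main obstacle will be multiplicativity. For composable $g',g$ in $\C$, one must show that the product on the Puig side, which involves summing contributions from all pairs $(f'_j,f_i)$ with $\s(f'_j)=g'$, $\s(f_i)=g$ and $\d(f'_j)=\c(f_i)$, and then applying the skew multiplication in $S[\D]$, matches the product on the Turull side, where the $\Ind^{\C}_{\s,\D}(S)$-values at the endpoints act via the induced structure maps $\Ind^{\C}_{\s,\D}(S)(g)$. The condition~(\ref{eq:423}) on $\s$ is exactly what guarantees that the pair-sums on the Puig side reorganize coherently into the induced structure-map action on the Turull side; the verification reduces to checking this on generators, using the description of $\Ind^{\C}_{\s,\D}(S)$ from Section~\ref{sec3} and the fact established in (i) that the $\sigma$-images commute with the $S$-coefficients in the expected way. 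Once multiplicativity is established, injectivity and surjectivity follow from the componentwise definition, and the theorem is complete.
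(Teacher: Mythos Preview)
Your treatment of part~(i) is correct and essentially identical to the paper's: the three checks (unitality, multiplicativity via $S(g)(1_{S(\c(f))})=1_{S(\c(g))}$, and grading preservation) are exactly what is carried out there.

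For part~(ii) the overall strategy---construct an explicit graded map and verify multiplicativity and interior compatibility---is the paper's as well, but your description of the Puig side contains a misconception that would derail the execution. You describe the $g$-component of $\IndP^{\C}_{\s,\D}(S[\D])$ as ``pieces indexed by $\s^{-1}(g)$'' and speak of ``corresponding coordinates of $\Ind^{\C}_{\s,\D}(S)(\c(g))$'' on the Turull side. But $\IndP^{\C}_{\s,\D}(S[\D])=({}_{\varepsilon}k\otimes_{k\S}S[\D])^{\S}$, and the tensor over $k\S$ \emph{collapses} the elements $1\otimes m f_i$ for the various lifts $f_i\in\s^{-1}(g)$ into a single class whenever $m\in M_S(\c(f_i))$: if $\s(f_1)=\s(f_2)$, condition~(\ref{eq:423}) gives $f_2=s_0\cdot f_1$ for some $s_0\in\S$, hence $1\otimes m f_2=1\otimes\sigma(s_0)(m f_1)=\varepsilon(s_0)\otimes m f_1=1\otimes m f_1$. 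So the $g$-component on the Puig side is not a direct sum over the fiber but a single copy; and on the Turull side $\Ind^{\C}_{\s,\D}(S)(\c(g))=M_S(y)$ (with $\s(y)=\c(g)$) is a single subalgebra of $S(y)$, with no fiber-indexed coordinates.

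Consequently the map the paper writes down is $\psi\bigl(m_{\c(f)}\,f'\bigr)=1\otimes m_{\c(f)}f$ for \emph{any} chosen lift $f$ of $f'$, and the genuinely nontrivial steps are \emph{well-definedness} of this choice and the verification that the image lies in the $\S$-fixed part; both are where (\ref{eq:423}) is actually used. Multiplicativity is then almost immediate---one simply picks composable lifts---and involves no ``summing contributions from all pairs $(f'_j,f_i)$'' as you anticipate. Your plan puts the weight of~(\ref{eq:423}) on the wrong step, and if literally implemented as a sum over lifts it would introduce a spurious factor $\lvert\s^{-1}(g)\rvert$ (because all summands coincide in the tensor product), which can kill the map in positive characteristic. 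The inverse is given by $\theta(1\otimes m_{\c(f)}f)=m_{\c(f)}\otimes\s(f)$ into $M_S\otimes k\C$, after noting that $(\Ind^{\C}_{\s,\D}(S))[\C]$ identifies with $(M_S\otimes k\C)^{\S}$.
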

There is  also a relatively different and recent interest for Turull’s and Puig’s induction. Note that in  \cite{art:Vit2022} Vitanov generalizes Turull’s and Puig’s induction to the case of sheaves of $G$-algebras and $\mathbb{C}G$-interior algebras (here $G$ is any group), in the context of $G$-equivariant topologies.

\section{Skew category algebras, twisted tensor product algebras and proof of Theorem \ref{thm:11}} \label{sec2}

Let $R:\mathcal{C}\to k$-$\Alg$ be a precosheaf of $k$-algebras on $\C.$ 

\goodbreak
\subsection{Skew category algebras.}\label{subsec:21} 
\medskip
We follow  \cite[Definition 3.3]{art:Wu2022} to recall that the skew category algebra  on $\mathcal{C}$ with respect to $R$, denoted  $R[\mathcal{C}]$, is the $k$-vector space spanned by the set
\[\{rf\mid f\in\mathrm{Mor}\,\mathcal{C}, r\in R(\mathrm{c}(f))\}.\]

The multiplication is given by the rule:
\begin{equation}
sg\ast rf=\left\{
\begin{array}{ll}
sR(g)(r)g\circ f,&\text{if } \mathrm{d}(g)=\mathrm{c}(f),\\
0, & \text{otherwise.}
\end{array}
\right.
\end{equation}
Extending the product linearly to any two elements, $R[\mathcal{C}]$ becomes an associative $k$-algebra. We have that $\obc$ is finite, then $\sum_{x\in \obc}1_{R(x)}1_x$ is the identity of $R[\mathcal{C}],$ hence $R[\mathcal{C}]$ is a $k$-algebra.

\subsection{A twisted tensor product on category algebras.}
\medskip

For shortness we will use in this subsection the following notations: by $A$ we denote the $k$-algebra
$\bigotimes_{x\in \mathrm{Ob}\,\mathcal{C}}R(x)$  and by $B$  we denote the category algebra $k\mathcal{C}.$
Let $\tau:B\otimes A\to A\otimes B$ be the map given by
\begin{equation} \label{eq:def_2}
\tau\left(f\otimes\left(\bigotimes_{x\in \mathrm{Ob}\,\mathcal{C}}r_x\right)\right)=
\left(\bigotimes_{t\in \mathrm{Ob}\,\mathcal{C}}r'_t\right)\otimes f,
\end{equation}
for any $f\in\mathrm{Mor}\,\mathcal{C}$, $r_x\in R(x)$ and $x\in\mathrm{Ob}\,\mathcal{C},$ where
\goodbreak
\[r'_t=\left\{\begin{array}{ll}
R(f)(r_{\mathrm{d}(f)}),& \text{if } t=\mathrm{c}(f)\\
r_x,&\text{if } t\neq \mathrm{c}(f)\  \text{and } t=x
\end{array}\right.\]

The multiplication $\mu_A$ in $A$  is the point wise multiplication induced by the multiplications of $R(x)$, $x\in \mathrm{Ob}\,\mathcal{C}$ and   $\mu_B$ is  the multiplication in the category algebra.

\goodbreak
\begin{proposition} \label{prop:2_1}
Since $\mathrm{Ob}\,\mathcal{C}$ is finite, we have that $\tau:B\otimes A\to A\otimes B$ is a twisting map.
\end{proposition}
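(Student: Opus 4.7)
The plan is to verify the three axioms of a twisting map directly from the definition (\ref{eq:def_2}). Since $\tau$ is $k$-linear by construction, it suffices to check every identity on simple tensors; the hypothesis that $\obc$ is finite is used only to ensure that $A=\bigotimes_{x\in\obc}R(x)$ is a well-defined unital $k$-algebra with unit $1_A=\bigotimes_{x\in\obc}1_{R(x)}$ and that $1_B=\sum_{x\in\obc}1_x$ is a finite sum.

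The two unit conditions are immediate from the functor axioms for $R$. For $\tau(f\otimes 1_A)$, substituting $r_x=1_{R(x)}$ in the formula for $r'_t$ gives $r'_{\mathrm{c}(f)}=R(f)(1_{R(\mathrm{d}(f))})=1_{R(\mathrm{c}(f))}$, since $R(f)$ is a unital $k$-algebra homomorphism, and $r'_t=1_{R(t)}$ for $t\neq\mathrm{c}(f)$; hence $\tau(f\otimes 1_A)=1_A\otimes f$. For $\tau(1_B\otimes a)$, I would expand $1_B=\sum_{x\in\obc}1_x$; for each $1_x$ one has $\mathrm{d}(1_x)=\mathrm{c}(1_x)=x$ and $R(1_x)=\id_{R(x)}$, so the formula yields $r'_t=r_t$ in every slot, whence $\tau(1_x\otimes a)=a\otimes 1_x$, and summation gives the desired identity.

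The heart of the proof is the hexagon identity, which I would verify on a generic simple tensor $g\otimes f\otimes a\otimes a'$ in $B\otimes B\otimes A\otimes A$. When $\mathrm{d}(g)\neq\mathrm{c}(f)$ both sides vanish: the left-hand side because $\mu_B(g\otimes f)=0$, the right-hand side because the final $\mu_B$ is applied to $g\cdot f=0$. In the case $\mathrm{d}(g)=\mathrm{c}(f)$ the $B$-factor on both sides equals $g\circ f$, and one must compare the $A$-factors slot by slot. The two facts doing all the work are (i) the compositionality $R(g\circ f)=R(g)\circ R(f)$, which lets the two successive transports performed on the right-hand side (by the $\tau$ for $f$ followed by the $\tau$ for $g$) recombine into the single transport performed on the left-hand side by the $\tau$ for $g\circ f$, and (ii) the multiplicativity of each $R(h)$, which is needed to interchange the multiplication in $A$ with the transports, since on the left-hand side the product $a\cdot a'$ is formed before the single transport while on the right-hand side the two transports occur before the final product. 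The principal obstacle is thus the bookkeeping: tracking how each of the four maps $\id_B\otimes\tau\otimes\id_A$, $\tau\otimes\tau$, $\id_A\otimes\tau\otimes\id_B$ and $\mu_A\otimes\mu_B$ alters each individual tensor slot of $A$, and then matching the outcome, slot by slot, against the single application of $\tau$ on the left, whereupon the identity reduces to (i) and (ii).
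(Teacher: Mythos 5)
Your proposal is correct and follows essentially the same route as the paper: a direct verification of the two unit axioms (using that each $R(f)$ is unital and $R(1_x)=\id_{R(x)}$) and of the hexagon identity by slot-by-slot bookkeeping, with the identification $I=J$ resting on exactly the two facts you isolate, namely $R(g\circ f)=R(g)\circ R(f)$ and the multiplicativity of each $R(h)$. The paper simply carries out in full the intermediate tensors $s'_t,s''_t,r'_t,r''_t$ that your plan describes.
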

\begin{proof}
Obviously, by our definition (\ref{eq:def_2}), $\tau$ is a $k$-linear map.

\smallskip
Next, we verify the three conditions of \cite[Proposition/Definition 2.3]{art:Cap1995}. Let $f\in\mathrm{Mor}\,\mathcal{C}$ and $\bigotimes_{x\in \mathrm{Ob}\,\mathcal{C}}r_x\in \bigotimes_{x\in \mathrm{Ob}\,\mathcal{C}}R(x)$. We obtain:
\[
\tau\left(f\otimes\left(\bigotimes\limits_{x\in \mathrm{Ob}\,\mathcal{C}}1_{R(x)}\right)\right) \quad \stackrel{(\ref{eq:def_2})}{=} \quad \left(\bigotimes\limits_{t\in \mathrm{Ob}\,\mathcal{C}}r'_t\right)\otimes f
\quad = \quad \left(\bigotimes\limits_{x\in \mathrm{Ob}\,\mathcal{C}}1_{R(x)}\right)\otimes f,
\]
since \[r'_t=\left\{\begin{array}{ll}
R(f)(1_{R_{\mathrm{d}(f)}}),& \text{if } t=\mathrm{c}(f)\\[0.1cm]
1_{R(x)},&\text{if } t\neq \mathrm{c}(f)\  \text{and } t=x
\end{array}\right.\]  and $R(f)$ is a unital homomorphism of $k$-algebras.
\[
\begin{array}{rcl}
\tau\left(\left(\sum\limits_{y\in \mathrm{Ob}\,\mathcal{C}}1_{y}\right)\otimes \left(\bigotimes\limits_{x\in \mathrm{Ob}\,\mathcal{C}}r_x\right)\right) & = & \sum\limits_{y\in \mathrm{Ob}\,\mathcal{C}}\tau\left(1_y\otimes\left(\bigotimes\limits_{x\in \mathrm{Ob}\,\mathcal{C}}r_x\right)\right)\\[0.4cm]
& = & \sum\limits_{y\in \mathrm{Ob}\,\mathcal{C}} \left(\bigotimes\limits_{t\in \mathrm{Ob}\,\mathcal{C}}r'_{t,y}\right)\otimes 1_y\\[0.4cm]
& = & \left(\bigotimes\limits_{x\in \mathrm{Ob}\,\mathcal{C}}r_x\right)\otimes \left(\sum\limits_{y\in \mathrm{Ob}\,\mathcal{C}} 1_y\right),
\end{array}
\]
where the last equality holds, since for any $y\in \obc$ we have  \[r'_{t,y}=\left\{\begin{array}{ll}
R(1_y)(r_y),& \text{if } t=y\\[0.1cm]
r_x,&\text{if } t\neq y\  \text{and } t=x.
\end{array}\right.\]

\smallskip
Next, we show that:
\[\tau\circ(\mu_B\otimes\mu_A)=(\mu_A\otimes\mu_B)\circ (\mathrm{id}_A\otimes \tau\otimes \mathrm{id}_B)\circ (\tau\otimes\tau)\circ(\mathrm{id}_B\otimes\tau\otimes\mathrm{id}_A).\]
Let $f,g\in\mathrm{Mor}\,\mathcal{C}$ and $\bigotimes_{x\in\mathrm{Ob}\,\mathcal{C}}s_x\in \bigotimes_{x\in\mathrm{Ob}\,\mathcal{C}}R(x)$. We have:
\[
\begin{array}{rcl}
I&:=&(\tau\circ(\mu_B\otimes\mu_A))\left((g\otimes f)\otimes\left(\left(\bigotimes\limits_{x\in\mathrm{Ob}\,\mathcal{C}}s_x\right)\otimes\left(\bigotimes\limits_{x\in\mathrm{Ob}\,\mathcal{C}}r_x\right)\right)\right)\\[0.4cm]
&=&\left\{
\begin{array}{ll}
\tau\left((g\circ f)\otimes\left(\bigotimes\limits_{x\in\mathrm{Ob}\,\mathcal{C}}s_xr_x\right)\right),&\text{if }\mathrm{d}(g)=\mathrm{c}(f)\\
0,&\text{otherwise}
\end{array}
\right.\\[0.7cm]
&=&\left\{
\begin{array}{ll}
\left(\bigotimes\limits_{t\in \mathrm{Ob}\,\mathcal{C}}p'_t\right)\otimes g\circ f,&\text{if }\mathrm{d}(g)=\mathrm{c}(f)\\
0,&\text{otherwise,}
\end{array}
\right.
\end{array}
\]
where, for   $x\in\obc,$ we have that
\[p'_t=\left\{\begin{array}{ll}
R(g\circ f)(s_{\mathrm{d}(g\circ f)}r_{\mathrm{d}(g\circ f)}),& \text{if } t=\mathrm{c}(g\circ f)\\[0.1cm]
s_xr_x,&\text{if } t\neq \mathrm{c}(g\circ f)\  \text{and } t=x.
\end{array}\right.\]
On the other hand
\begingroup
\allowdisplaybreaks
\[
\begin{array}{ll}
J&:=(\mu_A\otimes\mu_B)\\&\left(\id_A\otimes\tau\otimes\id_B\left(\tau\otimes\tau\left(\id_B\otimes\tau\otimes\id_A\left( (g\otimes f)\otimes
\left(\left(\bigotimes\limits_{x\in\mathrm{Ob}\,\mathcal{C}}s_x\right)\otimes\left(\bigotimes\limits_{x\in\mathrm{Ob}\,\mathcal{C}}r_x\right)\right) \right)\right)\right)\right)\\[0.5cm]
&= ((\mu_A\otimes\mu_B)\circ (\id_A\otimes\tau\otimes\id_B))
\left(\tau\otimes \tau
	\left(	\left(g				
			
			\otimes
			\left(
				\bigotimes\limits_{t\in\obc}s'_t
			\right)\right)
		\otimes \left( f
	
	\otimes
			\left(
				\bigotimes\limits_{x\in\obc}r_x
			\right)\right)\right)
		\right)\\[0.5cm]
& = ((\mu_A\otimes\mu_B)\circ (\id_A\otimes\tau\otimes\id_B))
	\left(	\left(\left(
				\bigotimes\limits_{t\in\obc}s''_t
			\right)
			\otimes
			g\right)
		\otimes \left( \left(
				\bigotimes\limits_{t\in\obc}r'_t
			\right)
	
	\otimes
			f\right)\right)\\[0.5cm]
			&= ((\mu_A\otimes\mu_B))
	\left(	\left(
				\bigotimes\limits_{t\in\obc}s''_t
			\right)
			\otimes
			\left(
				\bigotimes\limits_{t\in\obc}r''_t
			\right)\otimes g
		\otimes
			f\right)\\[0.5cm]

&= \left\{\begin{array}{ll}\left(
				\bigotimes\limits_{t\in\obc}s''_tr''_t
			\right)
			\otimes
			g\circ f, &\text{if } \mathrm{d}(g)=\mathrm{c}(f)\\[0.2cm]
			0,  &\text{otherwise,}
		\end{array}\right.
\end{array}
\]
\endgroup
where in the above  equality, for $x\in\obc,$ we have
\[
\begin{array}{rcl}
s'_t&=&\left\{\begin{array}{ll}
R(f)(s_{\mathrm{d}(f)}),& \text{if } t=\mathrm{c}(f)\\
s_x,&\text{if } t\neq \mathrm{c}(f)\  \text{and } t=x,
\end{array}\right.\\[0.4cm]

s''_t&=&\left\{\begin{array}{lll}
R(g)(R(f)(s_{\mathrm{d}(f)})),& \text{if } t=\mathrm{c}(g) \text{ and } \mathrm{d}(g)=\mathrm{c}(f)\\
R(g)(s_{\mathrm{d}(g)}),& \text{if } t=\mathrm{c}(g) \text{ and } \mathrm{c}(f)\neq \mathrm{d}(g)\\
s_x,&\text{if } t\neq \mathrm{c}(f)\  \text{and } t=x,
\end{array}\right.\\[0.7cm]

r'_t&=&\left\{\begin{array}{ll}
R(f)(r_{\mathrm{d}(f)}),& \text{if } t=\mathrm{c}(f)\\
r_x,&\text{if } t\neq \mathrm{c}(f)\  \text{and } t=x,
\end{array}\right.\\[0.4cm]

r''_t&=&\left\{\begin{array}{lll}
R(g)(R(f)(r_{\mathrm{d}(f)})),& \text{if } t=\mathrm{c}(g) \text{ and } \mathrm{d}(g)=\mathrm{c}(f)\\
R(g)(r_{\mathrm{d}(g)}),& \text{if } t=\mathrm{c}(g) \text{ and } \mathrm{c}(f)\neq \mathrm{d}(g)\\
r_x,&\text{if } t\neq \mathrm{c}(f)\  \text{and } t=x.
\end{array}\right.
\end{array}
\]
Since $\c(g)=\c(g\circ f)$ and $R(g\circ f)=R(g)\circ R(f)$ is a homomorphism of $k$-algebras, whenever $f,g \in \Mor\C$ and $\mathrm{d}(g)=\mathrm{c}(f),$ it follows that $I=J.$
\end{proof}
\goodbreak

\begin{corollary}\label{cor:2.2}
With the notations of Proposition \ref{prop:2_1}, $(A\otimes_{\tau}B,\mu_{\tau})$ is a twisted tensor product algebra.
More precisely,
$\left(\left(\bigotimes_{x\in\obc}R(x)\right)
\otimes k\C,\mu_{\tau}\right)$ is a twisted tensor product algebra, where $\mu_{\tau}=(\mu_A\otimes\mu_B)\circ (\id_A\otimes\tau\otimes\id_B).$
\end{corollary}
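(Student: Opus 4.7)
The plan is essentially to invoke Proposition \ref{prop:2_1} and appeal directly to the definition of twisted tensor product algebra recalled in the introduction. First I would note that, by Proposition \ref{prop:2_1}, the $k$-linear map $\tau:B\otimes A\to A\otimes B$ defined by (\ref{eq:def_2}) satisfies the three axioms of \cite[Proposition/Definition 2.3.]{art:Cap1995}, namely the two unit conditions and the hexagon-type compatibility with $\mu_A$ and $\mu_B$. Hence $\tau$ is a twisting map in the sense of \v{C}ap, Schichl and Van\v{z}ura.

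Next I would recall the definition stated in Section \ref{sec1}: for any twisting map $\tau$, the pair $(A\otimes B,\mu_\tau)$ with $\mu_\tau=(\mu_A\otimes\mu_B)\circ(\mathrm{id}_A\otimes\tau\otimes\mathrm{id}_B)$ is, by definition, a twisted tensor product algebra. Substituting $A=\bigotimes_{x\in\obc}R(x)$ and $B=k\C$ yields the stated algebra
\[\left(\left(\bigotimes_{x\in\obc}R(x)\right)\otimes k\C,\mu_\tau\right),\]
which we denote $(A\otimes_\tau B,\mu_\tau)$.

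There is no real obstacle here: all the technical work is already contained in the verification of the twisting axioms carried out inside Proposition \ref{prop:2_1}. The corollary is only a matter of naming the resulting structure and observing that the finiteness of $\obc$ (already used to make $A$ a well-defined unital $k$-algebra and to guarantee the identity $\sum_{y\in\obc}1_y$ of $k\C$) is precisely what allowed $\tau$ to be defined and the unit conditions to hold. Hence the proof reduces to a single sentence invoking Proposition \ref{prop:2_1}.
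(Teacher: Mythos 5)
Your proposal is correct and matches the paper's (implicit) reasoning: the paper gives no separate proof for this corollary, treating it exactly as you do — an immediate consequence of Proposition \ref{prop:2_1} together with the definition of twisted tensor product algebra from \cite[Proposition/Definition 2.3.]{art:Cap1995} recalled in Section \ref{sec1}.
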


For the rest of this section we shall use the notation ``$\cdot$'' for above the product $\mu_{\tau},$ that is:
\[
\begin{array}{ll}
& \left(\left(\bigotimes\limits_{x\in\obc}s_x\right)\otimes g\right)\cdot \left(\left(\bigotimes\limits_{x\in\obc}r_x\right)\otimes f\right)\\[0.4cm]
=
& \left\{
	\begin{array}{ll}
		\left(\bigotimes\limits_{y\in\obc}p_y\right)\otimes (g\circ f),& \text{if }\mathrm{d}(g)=\mathrm{c}(f)\\
	0,&\text{otherwise,}
	\end{array}
\right.
\end{array}\]
where, for $y\in\obc,$ we have
\begin{equation}\label{eqmult_twisted}
\begin{array}{ll}
p_y=& \left\{
	\begin{array}{ll}
	s_{\c(g)}R(g)(r_{\d(g)}),& \text{if }y=\mathrm{c}(g)\\[0.2cm]
	\bigotimes\limits_{y\in\obc}s_yr_y,&\text{if }y\neq \mathrm{c}(g).
	\end{array}
\right.
\end{array}
\end{equation}

\goodbreak
\begin{proof}[Proof of Theorem \ref{thm:11}]
For any $f\in\Mor\C$, $r\in R(\c(f))$, we define
\[
\Psi(rf) = \left(\bigotimes\limits_{x\in\obc}r_x\right)\otimes f,
\]
where, for $x\in \obc $, we have \[
\begin{array}{ll}
r_x=& \left\{
	\begin{array}{ll}
	r,& \text{if }x=\mathrm{c}(f)\\
	1_{R(x)},&\text{if }x\neq \mathrm{c}(f).
	\end{array}
\right.
\end{array}
\]
The above map $\Psi$ extends to a $k$-linear homomorphism. We will show next that $\Psi$ is a homomorphism of $k$-algebras. Let $f,g\in\Mor\C$ and $s\in R(\c(g))$, $r\in R(\c(f))$. On the left hand side, we obtain
\[
\begin{array}{l}
	\Psi((sg)\ast(rf))\\[0.2cm]
	 =
	\left\{
		\begin{array}{ll}
			\Psi(sR(g)(r)g\circ f),& \text{if }\mathrm{d}(g)=\mathrm{c}(f)\\
			0,&\text{otherwise}
		\end{array}
	\right.\\[0.4cm]
	 =
	 \left\{
		\begin{array}{ll}
			\left(
				\bigotimes\limits_{x\in\obc}m_x		
			\right)\otimes(g\circ f)			
			,& \text{if }\mathrm{d}(g)=\mathrm{c}(f)\\
			0,&\text{otherwise,}
		\end{array}
	\right.
\end{array}
\]
where, for $x\in \obc,$ we have
\[\begin{array}{ll}
m_x=& \left\{
	\begin{array}{ll}
	sR(g)(r),& \text{if }x=\c(g)\\
	1_{R(x)},&\text{if }x\neq\c(g).
	\end{array}
\right.
\end{array}
\]
On the right hand side
\[
	\begin{array}{ll}
		\Psi(sg) \cdot \Psi(rf) \\[0.2cm]
		 =
				\left(\left(\bigotimes\limits_{x\in\obc}s_x\right)\otimes g\right)\cdot \left(\left(\bigotimes\limits_{x\in\obc}r_x\right)\otimes f\right)\\[0.4cm]
	 =
	 \left\{
		\begin{array}{ll}
			\left(
				\bigotimes\limits_{x\in\obc}m_x		
			\right)\otimes(g\circ f)			
			,& \text{if }\mathrm{d}(g)=\mathrm{c}(f)\\
			0,&\text{otherwise,}
		\end{array}
	\right.
\end{array}
\]
by applying (\ref{eqmult_twisted}) and, since \[
\begin{array}{ll}
s_x=& \left\{
	\begin{array}{ll}
	s,& \text{if }x=\mathrm{c}(g)\\
	1_{R(x)},&\text{if }x\neq \mathrm{c}(g)
	\end{array}
\right.
\end{array}
\text{ and }
\begin{array}{ll}
r_x=& \left\{
	\begin{array}{ll}
	r,& \text{if }x=\mathrm{c}(f)\\
	1_{R(x)},&\text{if }x\neq \mathrm{c}(f).
	\end{array}
\right.
\end{array}
\]
The following $k$-linear map
\[
\Phi:\left(\bigotimes_{x\in\obc}R(x)\right)\otimes_{\tau}k\C\to R[\C],\qquad
\Phi\left(\left(\bigotimes\limits_{x\in\obc}r_x\right)\otimes f\right) = r_{\c(f)}f,
\]
for any $f\in\Mor\C$, $r_x\in R(x)$, $x\in\obc$, is clearly a left inverse of $\Psi$.

Moreover $\Psi$ preserves the identities , since
\[
	\begin{array}{rcl}		
		\Psi\left(\sum\limits_{x\in\obc}1_{R(x)}1_x\right) & = & \sum\limits_{x\in\obc}\Psi(1_{R(x)}1_x)\\[0.2cm]
		& = & \sum\limits_{x\in\obc}\left(\bigotimes\limits_{t\in\obc}u_t\right)\otimes 1_x\\[0.4cm]
		& = & \left(\bigotimes\limits_{x\in\obc}1_{R(x)}\right)\otimes 1_{k\C},
	\end{array}
\]
since, for some fixed $x\in\obc,$ we have
\[\begin{array}{ll}
u_t=& \left\{
	\begin{array}{ll}
	1_{R(x)},& \text{if }t=x\\
	1_{R(t)},& \text{if }t\neq x,
	\end{array}
\right.
\end{array}
\]
for any $t\in\obc.$
\end{proof}

\pagebreak
\section{Surjective Turull's induction of precosheaves of \texorpdfstring{$k$}{k}-algebras} \label{sec3}

Let $\C, \D$ be  two finite categories.
\color{black}

\subsection{Precosheaves of \texorpdfstring{$k$}{k}-algebras and modules over category algebras.} \label{subsec:31}
\medskip

In this subsection let $S:\D\to k\textrm{-}\Alg$ be a precosheaf of $k$-algebras on $\D.$  We identify $k\D\textrm{-}\mathrm{mod}$ and $(k\textrm{-}\mathrm{Vect})^{\D}$ (if this does not cause any trouble), in the following manner (see \cite[Section 2.2]{art:Xu2007}).
Since $S\in (k\textrm{-}\mathrm{Vect})^{\D}$, we define the $k\D$-module $M_S$ to be
\[M_S=\bigoplus_{x\in\obd}S(x),\]
with the left $k\D$-action given by
\[
f\cdot m=
\begin{cases}
S(f)(m),& \text{if }x=\d(f)\\
0,&\text{otherwise},
\end{cases}
\]
for any $x\in\obd$, $m\in S(x)$, $f\in\Mor\D$. Conversely, if $M\in k\D\textrm{-}\mathrm{mod}$, we define the functor $S_M\in (k\textrm{-}\mathrm{Vect})^{\D}$ such that $$S_M(x)=1_x\cdot M,$$ for any $x\in\obd$. If $f\in\Hom_{\D}(x,y)$, then $S_M(f)\in\Hom_{k\textrm{-}\mathrm{Vect}}(S_M(x),S_M(y))$ is given by $S_M(f)(m)=f\cdot m$, for any $x,y\in\obd$, $m\in 1_x\cdot M$.

\medskip
\subsection{Surjective Turull's induction of precosheaves of \texorpdfstring{$k$}{k}-algebras.}
\medskip

Let $\s:\D\to\C$ be a covariant functor. We  need the following (probably) well-known relation on the set of morphisms  $\Mor\D$:
\begin{equation}\label{eq4}
 f_1\sims f_2 \Longleftrightarrow \s(f_1)=\s(f_2),
\end{equation}
for any $f_1,f_2\in\Mor\D.$

The following (folklore) lemmas are easy to verify and contain statements about $\s$ and about the relation $\sims$, defined above.
\begin{lemma} \label{lemma:343}
Let $f_1,f_2,g_1,g_2\in\Mor\D$. The following statements are true:
\begin{enumerate}[\normalfont (i)]
\item $\sims$ is an equivalence relation on $\Mor\D$;
\item If $f_1 \sims f_2$, $g_1\sims g_2$, $\d(g_1)=\c(f_1)$ and $\d(g_2)=\c(f_2)$ then $g_1\circ f_1\sims g_2\circ f_2$;
\item If $\s$ is injective on objects and $f_1\sims f_2$ then $\d(f_1)=\d(f_2)$ and $\c(f_1)=\c(f_2)$.
\end{enumerate}
\end{lemma}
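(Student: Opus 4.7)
The plan is to verify each of the three statements directly from the definition of $\sims$ and from functoriality of $\s$. The relation $\sims$ is, by (\ref{eq4}), the pullback of equality along the map $\s:\Mor\D\to\Mor\C$, so most of the work is bookkeeping.

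For part (i), since $f_1\sims f_2$ is defined to mean $\s(f_1)=\s(f_2)$, the relation $\sims$ is the pre-image under $\s$ of the diagonal of $\Mor\C\times\Mor\C$. Hence it automatically inherits reflexivity, symmetry and transitivity from the equality relation on $\Mor\C$; no further argument is needed.

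For part (ii), I would first check that both $g_1\circ f_1$ and $g_2\circ f_2$ are defined in $\D$, which is exactly the hypothesis $\d(g_i)=\c(f_i)$ for $i=1,2$. Then, applying $\s$ (a covariant functor) and using $\s(f_1)=\s(f_2)$ and $\s(g_1)=\s(g_2)$, I would compute
\[
\s(g_1\circ f_1)=\s(g_1)\circ\s(f_1)=\s(g_2)\circ\s(f_2)=\s(g_2\circ f_2),
\]
which by definition of $\sims$ gives $g_1\circ f_1\sims g_2\circ f_2$. The compositions on the middle terms are defined in $\C$ because $\d(\s(g_i))=\s(\d(g_i))=\s(\c(f_i))=\c(\s(f_i))$, again by functoriality.

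For part (iii), assume $\s$ is injective on objects and $f_1\sims f_2$, i.e.\ $\s(f_1)=\s(f_2)$. Applying the source and target maps and using functoriality,
\[
\s(\d(f_1))=\d(\s(f_1))=\d(\s(f_2))=\s(\d(f_2)),
\]
and similarly for $\c$. Injectivity of $\s$ on objects yields $\d(f_1)=\d(f_2)$ and $\c(f_1)=\c(f_2)$. The only potential snag in the whole proof is making sure the compositions to which functoriality is applied are actually defined, but the stated domain/codomain hypotheses and functoriality handle this cleanly; there is no substantive obstacle.
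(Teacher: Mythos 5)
Your proof is correct; the paper itself offers no proof of this lemma, labelling it as folklore and ``easy to verify,'' and your verification is exactly the standard argument the authors intend (pullback of equality for (i), functoriality of $\s$ on compositions for (ii), and functoriality plus injectivity on objects for (iii)). No gaps.
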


\begin{lemma} \label{assumptions:341}
If $\s:\D\to\C$ is an  injective on objects, surjective on morphisms covariant functor, then:
\begin{itemize}
\item[(i)] $\s(\D)$ is a subcategory of $\C$;
\item[(ii)] $\s$ is surjective on objects; in particular it is bijective on objects.
\end{itemize}
\end{lemma}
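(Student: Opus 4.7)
The plan is to dispatch (ii) first, since surjectivity on objects is an immediate consequence of surjectivity on morphisms combined with the fact that every object has an identity. Given $y\in\Ob\C$, pick $f\in\Mor\D$ with $\s(f)=1_y$, which exists because $\s$ is surjective on morphisms; then by functoriality $y=\c(1_y)=\c(\s(f))=\s(\c(f))$, so $y$ lies in the image of $\s$ on objects. Combined with the injectivity on objects assumption, this makes $\s$ a bijection between $\Ob\D$ and $\Ob\C$.

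For (i), the candidate subcategory $\s(\D)$ has as objects $\s(\Ob\D)$ and as morphisms $\s(\Mor\D)$. I would verify the two subcategory axioms. First, identities: for each $\s(x)\in\s(\Ob\D)$, functoriality yields $1_{\s(x)}=\s(1_x)\in\s(\Mor\D)$. Second, closure under $\C$-composition: suppose $\s(g), \s(f)\in\s(\Mor\D)$ are composable in $\C$, i.e.\ $\d(\s(g))=\c(\s(f))$. Functoriality rewrites this as $\s(\d(g))=\s(\c(f))$, and here is where injectivity on objects enters: it forces $\d(g)=\c(f)$, so $g\circ f$ is defined in $\D$ and $\s(g)\circ\s(f)=\s(g\circ f)\in\s(\Mor\D)$.

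The only potential subtlety is this last step: in an arbitrary functor, two morphisms in the image may be composable in the target without their chosen preimages being composable in the source. The hypothesis that $\s$ is injective on objects is exactly what rules this out, and is the only nontrivial ingredient of the argument. Everything else is bookkeeping with functoriality.
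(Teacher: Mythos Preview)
Your argument is correct. The paper does not actually supply a proof of this lemma; it introduces both Lemma~\ref{lemma:343} and Lemma~\ref{assumptions:341} with the remark that they are ``(folklore)'' and ``easy to verify,'' and leaves them unproved. Your write-up fills in precisely the expected verification, and you correctly isolate the one nontrivial point: injectivity on objects is what guarantees that composable images $\s(g),\s(f)$ in $\C$ come from composable preimages in $\D$, so that $\s(\Mor\D)$ is closed under composition.
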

Throughout this subsection let $S:\D\rightarrow k\textrm{-}\Alg$ be a precosheaf of $k$-algebras on $\D.$

\begin{definition} \label{def:344}
Let $\s:\D\to\C$ be an  injective on objects, surjective on morphisms covariant functor and let $x\in\obd$. We consider the  $k$-subspace $M_S(x)$ of $S(x),$ that is formed by all elements
$m_x\in S(x),$ such that $$S(f)(m_x)=S(g)(m_x),$$ for any $f\in\Mor\D$ with $\d(f)=x$ and any $ g\in\sims<f>;$ further, we obtain the $k$-space
\[\oM=\bigoplus_{x\in\obd}M_S(x).\]
We also define the next operation \[h\cdot m_x=\begin{cases}
S(f)(m_x),&\text{if }f\in\Mor\D\text{ such that } \s(f)=h \text{ and } x=\d(f),\\
0,&\text{otherwise,}
\end{cases}\]
for any $h\in\Mor\C$, $m_x\in M_S(x)$.

\end{definition}
\goodbreak

\begin{proposition} \label{prop:345} Let $\s:\D\to\C$ be an  injective on objects, surjective on morphisms covariant functor and let $x\in\obd$. Then:
\begin{enumerate}[{\normalfont (i)}]
\item $M_S(x)$ is a $k$-subalgebra of $S(x)$;
\item With component-wise multiplication $\oM$  is a $k$-subalgebra of  $\prod_{x\in\obd}S(x);$
\item $\oM$ is a left $k\C$-module, with the above operation ``$\cdot$''.
\end{enumerate}
\end{proposition}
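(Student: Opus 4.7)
The plan is to handle (i), (ii), (iii) in order, with the bulk of the work in (iii); throughout I would use that each $S(f)$ is a unital $k$-algebra homomorphism and that $f \sims g$ forces $\c(f) = \c(g)$ (Lemma \ref{lemma:343}(iii)). For (i), given $m, m' \in M_S(x)$, $\lambda \in k$, and any $f, g \in \Mor\D$ with $\d(f) = x$ and $f \sims g$, the invariance identity $S(f)(-) = S(g)(-)$ transfers from $m, m'$ to $m + m'$, $\lambda m$, and $mm'$; for the unit, $S(f)(1_{S(x)}) = 1_{S(\c(f))} = 1_{S(\c(g))} = S(g)(1_{S(x)})$. Part (ii) is then immediate: since $\obd$ is finite, $\oM = \bigoplus_{x\in\obd} M_S(x)$ coincides with $\prod_{x\in\obd} M_S(x)$, which by (i) is a $k$-subalgebra of $\prod_{x\in\obd} S(x)$ with identity $\sum_{x\in\obd} 1_{S(x)}$.

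For (iii), I would first check that the formula is independent of the lift $f$ of $h$: if $\s(f_1) = \s(f_2) = h$ and $\d(f_i) = x$, then $f_1 \sims f_2$, so $S(f_1)(m_x) = S(f_2)(m_x)$ by definition of $M_S(x)$. The critical step is closure, i.e., that $S(f)(m_x) \in M_S(\c(f))$. For this, take any $f'$ with $\d(f') = \c(f)$ and any $g' \sims f'$. Functoriality of $\s$ gives $\s(f' \circ f) = \s(f') \circ \s(f) = \s(g') \circ \s(f) = \s(g' \circ f)$, so $f' \circ f \sims g' \circ f$; both compositions have domain $x$ and $m_x \in M_S(x)$, hence $S(f')(S(f)(m_x)) = S(f' \circ f)(m_x) = S(g' \circ f)(m_x) = S(g')(S(f)(m_x))$, as required.

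It remains to verify the module axioms for the $k$-linear extension of the action. Distributivity is clear. For associativity $h_1 \cdot (h_2 \cdot m_x) = (h_1 \circ h_2) \cdot m_x$ (with $\d(h_1) = \c(h_2)$), in the nontrivial case where $h_2 \cdot m_x \neq 0$, I pick $f_2$ with $\s(f_2) = h_2$ and $\d(f_2) = x$, and use surjectivity on morphisms of $\s$ combined with its bijectivity on objects (Lemma \ref{assumptions:341}) to lift $h_1$ to some $f_1$ with $\d(f_1) = \c(f_2)$; then both sides equal $S(f_1 \circ f_2)(m_x)$. The vanishing case reduces to the observation that $h_2 \cdot m_x = 0$ iff $\d(h_2) \neq \s(x)$, which then forces $\d(h_1 \circ h_2) \neq \s(x)$ and hence the right-hand side also vanishes. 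Finally, $1_{k\C} \cdot m_x = 1_{\s(x)} \cdot m_x = S(1_x)(m_x) = m_x$, since $\s$ is bijective on objects and $1_y \cdot m_x = 0$ for $y \neq \s(x)$. The main obstacle is the closure step in (iii), the one place where the defining invariance of $M_S(x)$ must be tested against itself through the functoriality of $\s$; the remaining verifications are either routine or follow a uniform lift-and-compute pattern enabled by the bijectivity of $\s$ on objects.
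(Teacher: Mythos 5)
Your proof is correct and follows essentially the same route as the paper's (same use of Lemma \ref{lemma:343}(iii) for the unit in (i), the same pointwise-homomorphism argument for products, and the same lift-independence check for well-definedness of ``$\cdot$''). In fact you prove slightly more than the paper writes down: the paper's proof of (iii) stops at well-definedness, whereas you also verify the genuinely necessary closure step $S(f)(m_x)\in M_S(\c(f))$ (via $f'\circ f\sims g'\circ f$) and the module axioms, all of which are correct.
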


\begin{proof}
(i) Clearly $M_S(x)$ is a $k$-submodule of $S(x)$.

We first verify that $1_{S(x)}\in M_S(x),$ which is true if and only if $S(f)(1_{S(x)})=S(g)(1_{S(x)})$, for all $f\in\Mor\D$, with $\d(f)=x$ and for all $g\in\sims<f>$. Let $f\in\Mor\D$ such that $\d(f)=x$ and $g\in\Mor\D$ such that $\s(f)=\s(g)$. By Lemma \ref{lemma:343}.(iii) it follows that $\d(f)=\d(g)=x$ and $\c(f)=\c(g)$ hence
\[S(f)(1_{S(x)})=1_{S(\c(f))}=1_{S(\c(g))}=S(g)(1_{S(x)})\]

Next, let $m_x,m'_x\in M_S(x)$ and $f\in\Mor\D$ such that $\d(f)=x$ and $g\in\sims<f>$. We obtain:
\[S(f)(m_x m'_x)=S(f)(m_x)\cdot S(f)(m'_x)=S(g)(m_x)\cdot S(g)(m'_x)=S(g)(m_x m'_x).\]

\smallskip
(ii) This  is a consequence of (i).

\smallskip
(iii) First we justify why ``$\cdot$'' is well-defined: assume that there exist $f_1,f_2\in\Mor\D$ such that 
\[\s(f_1)=h,\qquad \s(f_2)=h,\qquad x=\d(f_1)=\d(f_2).\] 
It follows that $\s(f_1)=\s(f_2)$ and $\c(f_1)=\c(f_2),$ hence $f_1\in\sims<f_2>$. Using Definition \ref{def:344} we obtain $$S(f_1)(m_x)=S(f_2)(m_x),$$ since $m_x\in M_S(x).$\qedhere
\end{proof}

In the following remark we describe explicitly how does $k\C$ acts on $\oM.$
\begin{remark} \label{remark:345p} We assume that $\s:\D\to\C$ is an  injective on objects, surjective on morphisms covariant functor.
\begin{itemize}
\item[(i)]
Let $\sum_{g\in\Mor\C}\alpha_g g\in k\C$ and $\bigoplus_{x\in\obd}m_x\in\oM,$ where $m_x\in M_S(x),x\in \obd.$ Then 
\[
\left(\sum\limits_{g\in\Mor\C}\alpha_g g\right)\cdot\left(\bigoplus\limits_{x\in\obd}m_x\right)  =  \sum\limits_{g\in\Mor\C}\left(g\cdot\bigoplus\limits_{x\in\obd}m_x \right).
\]
But, for each $g\in\Mor\C$ there is a unique equivalence class $\sims<f>$, with $f\in\Mor\D$ such that  $\s(f)=g.$ It follows that $g$ acts on the elements $m_x\in M_S(x)$ such that $x=\d(f)$, hence
\[
\left(\sum\limits_{g\in\Mor\C}\alpha_g g\right)\cdot\left(\bigoplus\limits_{x\in\obd}m_x\right) =  \sum\limits_{g\in\Mor\C}\bigoplus\limits_{f\in\Mor\D ,\s(f)=g}g\cdot m_{\d(f)}.
\]
\item[(ii)] If $1_{x'}\in\Mor\C$, $x'\in\obc$, then there exists a unique $x\in\obd$ such that $\s(x)=x'$, hence $\s(1_x)=1_{\s(x)}=1_{x'}$. Therefore
\[
1_{x'}\cdot\left(\bigoplus_{x\in\obd}m_x\right)=S(1_x)(m_x)=\id_{S(x)}(m_x)=m_x.
\]
Thus
\[
\left(\sum_{x'\in\obc}1_{x'}\right)\cdot \bigoplus_{x\in\obd}m_x = \bigoplus_{x\in\obd}m_x,
\]
since $\s$ is bijective on objects, by Lemma \ref{assumptions:341}, (ii).
\end{itemize}
\end{remark}
\color{black}
\begin{definition}\label{def:37}
Let $\s:\D\to\C$ be an  injective on objects, surjective on morphisms covariant functor. Using the discussion of Subsection \ref{subsec:31} and Proposition \ref{prop:345}.(iii) we obtain the following functor, the \textbf{surjective Turull's induction of precosheaves of $k$-algebras}:
\[
\Ind^{\C}_{\s,\D}(S):\C\to k\m\mathrm{Vect}\qquad \Ind^{\C}_{\s,\D}(S)(x')=1_{x'}\cdot\oM,
\]
for all $x'\in\obc.$ For any $g\in\Hom_{\C}(x',y')$, where $x',y'\in\obc,$ we have
\[
\Ind^{\C}_{\s,\D}(S)(g):1_{x'}\cdot \oM\to 1_{y'}\cdot\oM\qquad \Ind^{\C}_{\s,\D}(S)(g)(m)=g\cdot m,
\]
for any $m\in\oM.$
\end{definition}
\goodbreak

\begin{proposition}\label{prop:38}
Let $S:\D\to k \textrm{-}\Alg$ be a precosheaf of $k$-algebras. Let $\s:\D\to\C$ be an  injective on objects, surjective on morphisms covariant functor. Then $\Ind^{\C}_{\s,\D}(S)$ is a precosheaf of $k$-algebras.
\end{proposition}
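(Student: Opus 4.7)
The plan is to unpack the definition of $\Ind^{\C}_{\s,\D}(S)$ and verify, in turn, (a) that each object is sent to a $k$-algebra, (b) that each morphism is sent to a unital $k$-algebra homomorphism, and (c) that identities and composition are preserved. Throughout I will use that $\s$ is bijective on objects (Lemma \ref{assumptions:341}.(ii)) so that each $x'\in\obc$ has a unique preimage $x\in\obd$ with $\s(x)=x'$; I will write $x = \s^{-1}(x')$.

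First, by Remark \ref{remark:345p}.(ii), the object component simplifies to $1_{x'}\cdot\oM = M_S(\s^{-1}(x'))$, and Proposition \ref{prop:345}.(i) tells us that this is a $k$-subalgebra of $S(\s^{-1}(x'))$, in particular a $k$-algebra. This settles (a).

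For (b), fix $g\in\Hom_\C(x',y')$. By surjectivity on morphisms choose $f\in\Mor\D$ with $\s(f)=g$; by Lemma \ref{lemma:343}.(iii) together with $\s$ being injective on objects, every such $f$ satisfies $\d(f)=\s^{-1}(x')$ and $\c(f)=\s^{-1}(y')$. The action $g\cdot m = S(f)(m)$ is well defined on $M_S(\s^{-1}(x'))$ by the argument already used in Proposition \ref{prop:345}.(iii) (any two lifts agree after applying $S$ to elements of $M_S(\s^{-1}(x'))$). Since $S(f):S(\s^{-1}(x'))\to S(\s^{-1}(y'))$ is a unital $k$-algebra homomorphism, the restriction to $M_S(\s^{-1}(x'))$ is multiplicative and sends $1_{S(\s^{-1}(x'))}$ (which lies in $M_S(\s^{-1}(x'))$ by Proposition \ref{prop:345}.(i)) to $1_{S(\s^{-1}(y'))}$. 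It remains to check that the image lands inside $M_S(\s^{-1}(y'))$: if $f_1,f_2\in\Mor\D$ are two lifts with $\d(f_i)=\s^{-1}(y')$, then using $m\in M_S(\s^{-1}(x'))$ one checks that $S(f_1)(S(f)(m))=S(f_1\circ f)(m)=S(f_2\circ f)(m)=S(f_2)(S(f)(m))$, where the middle equality is the defining property of $M_S(\s^{-1}(x'))$ applied to $f_1\circ f \sims f_2\circ f$.

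For (c), identities are handled directly by Remark \ref{remark:345p}.(ii), which gives $\Ind^{\C}_{\s,\D}(S)(1_{x'})=\id$. For composition, take $g\in\Hom_\C(x',y')$, $h\in\Hom_\C(y',z')$ and choose lifts $f,f''\in\Mor\D$ with $\s(f)=g$, $\s(f'')=h$, $\d(f)=\s^{-1}(x')$, $\d(f'')=\s^{-1}(y')$. Then $f''\circ f$ is a lift of $h\circ g$ with the correct domain, so $(h\circ g)\cdot m = S(f''\circ f)(m) = S(f'')(S(f)(m)) = h\cdot(g\cdot m)$, using functoriality of $S$.

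The only delicate point — and the step where I would spend most of the care — is ensuring that the induced map lands in $M_S(\s^{-1}(y'))$, since the definition of $M_S$ is the key non-formal ingredient; everything else is bookkeeping once the identifications via $\s^{-1}$ on objects are set up.
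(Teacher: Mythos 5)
Your proof is correct and follows essentially the same route as the paper: identify $1_{x'}\cdot\oM$ with $M_S(\s^{-1}(x'))$ via Remark \ref{remark:345p}.(ii) and Proposition \ref{prop:345}, and observe that the morphism components are restrictions of the unital algebra homomorphisms $S(f)$. Your explicit check that $S(f)$ maps $M_S(\s^{-1}(x'))$ into $M_S(\s^{-1}(y'))$ (via $f_1\circ f\sims f_2\circ f$) is a point the paper leaves implicit in the $k\C$-module structure of $\oM$, and it is handled correctly.
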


\begin{proof}
Let $x',y'\in\obc$, $g\in\Hom_{\C}(x',y')$ and let $x,y\in\obd$ be the unique objects of $\C$ such that $\s(x)=x'$ and $\s(y)=y'$.
\[
\begin{array}{rcl}
\Ind^{\C}_{\s,\D}(S)(x') & = & 1_{x'}\cdot \oM\\[0.1cm]
& = & 1_{x'}\cdot\left(\bigoplus\limits_{x\in\obd}M_S(x)\right)\\[0.4cm]
& = & S(1_x)(M_S(x))\\[0.1cm]
& = & \id_{S(x)}(M_S(x))\\[0.1cm]
& = & M_S(x),
\end{array}
\]
which is a unitary $k$-algebra (see  Remark \ref{remark:345p}.(ii) and Proposition \ref{prop:345}).
Let $f\in\Hom_{\D}(x,y)$ such that $\s(f)=g$, then $\Ind^{\C}_{\s,\D}(S)$ is given by:
\[
\begin{array}{l}
\Ind^{\C}_{\s,\D}(S)(g):M_S(x)\to M_S(y),\quad
\Ind^{\C}_{\s,\D}(S)(g)(m_x)=S(f)(m_x),
\end{array}
\]
for all $m_x\in S(x);$
which is a homomorphism of $k$-algebras.
\end{proof}

\section{Surjective Puig's  induction of interior category algebras and proof of Theorem \ref{thm:13}}\label{sec4}

Let $\C$, $\D$ be finite  categories and let $\s:\D\to\C$ be an injective on objects, surjective on morphisms covariant functor. Then, by Lemma \ref{assumptions:341}.(ii) we know that $\s:\obd\to\obc$ is a bijection given by $\s(x)=x'$, for any $x\in\obd$. For $x\in\obd$ with $\s(1_x)=1_{x'}$ (i.e. $\s(x)=x'$), we denote by $\mathrm{Id}_x$ the equivalence class $\sims<1_x>$, that is
\[\mathrm{Id}_x=\left\{f\in\Mor\D\mid \s(f)=1_{x'}\right\}.\]
We consider the following subset of $k\D$
\[
\S=\left\{\sum_{x\in\obd}f_x\mid f_x\in\mathrm{Id}_x, x\in\obd \right\}.
\]

\begin{lemma} \label{lemma:421}
Let $x\in\obd$.
\begin{enumerate}[\normalfont  (i)]
\item If $\obd=\{x_1,\ldots,x_n\}$, $s_1=f_1+\ldots+f_n$, $s_2=g_1+\ldots+g_n$ with $f_i,g_i\in\mathrm{Id}_x$, $i\in\{1,\ldots,n\}$ then \[s_1\cdot s_2=f_1\circ g_1+\ldots+f_n\circ g_n\in\S;\]
\item $(\S,\cdot)$ is a submonoid of $(k\D,\cdot)$;
\item $k\S$ is a $k$-subalgebra of $k\D$ and as a monoid algebra has an augmentation map, which we denote by $\varepsilon:k\S\to k$, $\varepsilon(s)=1_{k}$, for any $s\in\S$.
\end{enumerate}
\end{lemma}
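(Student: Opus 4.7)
The plan splits cleanly into the three parts. For part (i) the key observation is that the two sums $s_1$ and $s_2$ are really ``diagonal'' with respect to the blocks $\mathrm{Id}_{x_i}$, so only the diagonal of the bilinear expansion survives. Part (ii) collects what (i) provides, plus a one-line check that $1_{k\D}\in\S$. The content of part (iii), and the place where there is real work to do, is showing that the augmentation $\varepsilon$ descends to $k\S$.

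For (i), I would expand
\[
s_1\cdot s_2 \;=\; \sum_{i,j} f_i\cdot g_j
\]
in $k\D$. For each $i$, since $\s(f_i)=1_{x_i'}$ (with $x_i':=\s(x_i)$) we have $f_i\sims 1_{x_i}$, and Lemma \ref{lemma:343}(iii) (applicable because $\s$ is injective on objects) yields $\d(f_i)=\c(f_i)=x_i$; likewise $\d(g_j)=\c(g_j)=x_j$. Hence $f_i\cdot g_j=0$ unless $x_i=x_j$, i.e.\ $i=j$, so $s_1\cdot s_2=\sum_i f_i\circ g_i$. Functoriality of $\s$ then gives $\s(f_i\circ g_i)=\s(f_i)\circ\s(g_i)=1_{x_i'}$, so each composite lies in $\mathrm{Id}_{x_i}$ and the total sum lies in $\S$.

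For (ii), closure of $(\S,\cdot)$ under multiplication is exactly (i). The unit of $k\D$ is $1_{k\D}=\sum_{x\in\obd}1_x$, and $1_x\in\mathrm{Id}_x$ because $\s(1_x)=1_{\s(x)}=1_{x'}$; hence $1_{k\D}\in\S$, so $(\S,\cdot)$ is a submonoid of $(k\D,\cdot)$ and its $k$-span $k\S\subseteq k\D$ is a unital $k$-subalgebra.

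The main obstacle in (iii) is checking that the assignment $\varepsilon(s)=1$ for $s\in\S$ extends to a well-defined $k$-linear map $k\S\to k$. I would analyse an arbitrary relation $\sum_i \alpha_i s_i=0$ in $k\D$, where $s_i=\sum_{x\in\obd} f_{i,x}$ with $f_{i,x}\in\mathrm{Id}_x$. Because the sets $\mathrm{Id}_x$ for distinct $x$ are pairwise disjoint subsets of the basis $\Mor\D$ (every element of $\mathrm{Id}_x$ has domain and codomain $x$), collecting the coefficients at a single fixed $x$ forces $\sum_i \alpha_i f_{i,x}=0$ inside the $k$-span of $\mathrm{Id}_x$; regrouping by the specific morphism $h\in\mathrm{Id}_x$ and using linear independence of basis elements gives $\sum_{i:\,f_{i,x}=h}\alpha_i=0$ for every $h$, and summing over $h\in\mathrm{Id}_x$ produces $\sum_i \alpha_i=0$, as required. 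Once well-definedness is in hand, $\varepsilon$ is a $k$-algebra homomorphism because, by (i), $s_1\cdot s_2\in\S$ satisfies $\varepsilon(s_1\cdot s_2)=1=\varepsilon(s_1)\varepsilon(s_2)$, and this extends by $k$-linearity.
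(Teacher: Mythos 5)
Your proof is correct. Note that the paper states Lemma \ref{lemma:421} without any proof at all, so there is no argument of the authors to compare against; your write-up simply supplies the missing details, and it does so accurately: the identification of each $f_i\in\mathrm{Id}_{x_i}$ as an endomorphism of $x_i$ via Lemma \ref{lemma:343}(iii) (you rightly read the statement's ``$f_i,g_i\in\mathrm{Id}_x$'' as $\mathrm{Id}_{x_i}$) kills the off-diagonal terms in (i), and (ii) follows. The one place where you add genuine value is (iii): the well-definedness of $\varepsilon$ is not automatic, because distinct elements of $\S$ need not be linearly independent in $k\D$ --- for instance with $\obd=\{x,y\}$, $\mathrm{Id}_x=\{1_x,a\}$, $\mathrm{Id}_y=\{1_y,b\}$ one has $(1_x+1_y)-(1_x+b)-(a+1_y)+(a+b)=0$ --- so $k\S$ is not literally the monoid algebra on $\S$, and the paper's phrase ``as a monoid algebra'' glosses over exactly the point you check. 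Your argument (fix one object $x$, use disjointness of the sets $\mathrm{Id}_x$ and linear independence of $\Mor\D$ to extract $\sum_i\alpha_i=0$) is the right way to see that $\varepsilon$ nonetheless descends, and multiplicativity of $\varepsilon$ then follows from (i) as you say.
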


\goodbreak
\begin{lemma} \label{lemma:422} We assume that
\begin{equation}\label{eq:423}
\sims<f>=\mathrm{Id}_y\circ f=f\circ\mathrm{Id}_x, \quad \text{for any } f\in\Hom_{\D}(x,y).
\end{equation}
\begin{enumerate}[\normalfont (i)]
\item If $f\in\Mor\D$, $s\in\S$ then there exist $s',s''\in\S$ such that \[s\cdot f=f\cdot s',\quad f\cdot s=s''\cdot f;\]
\item Let $S:\D\to k\mathrm{-}\Alg$ be a precosheaf of $k$-algebras on $\D$ and $x\in\obd$. The following statements are equivalent:
\begin{enumerate}[\normalfont (a)]
\item $m_x\in M_S(x)$;
\item $S(f_x)(m_x)=m_x$, for any $f_x\in\mathrm{Id}_x$.
\end{enumerate}
\end{enumerate}
\end{lemma}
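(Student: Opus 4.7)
The proof will split naturally into the two parts. For part (i), the plan is to unwind what the product $s\cdot f$ looks like in $k\D$. Writing $s=\sum_{z\in\obd} f_z$ with $f_z\in\mathrm{Id}_z$, one notes first that every $f_z$ is an endomorphism of $z$: indeed $f_z\sims 1_z$, and Lemma \ref{lemma:343}.(iii) forces $\d(f_z)=\c(f_z)=z$. Hence in the product $s\cdot f=\sum_z f_z\cdot f$ only the term with $z=\c(f)$ survives. So if $f\in\Hom_\D(x,y)$ we simply get $s\cdot f=f_y\circ f$.

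Now the hypothesis (\ref{eq:423}) comes in. Since $f_y\in\mathrm{Id}_y$ we have $f_y\circ f\in\mathrm{Id}_y\circ f=\sims<f>=f\circ\mathrm{Id}_x$, so there exists $f'_x\in\mathrm{Id}_x$ with $f_y\circ f=f\circ f'_x$. Define $s'=\sum_{z\in\obd}g_z$ where $g_x=f'_x$ and $g_z=1_z$ for $z\ne x$ (which is legal because $1_z\in\mathrm{Id}_z$ always). By the same bookkeeping, $f\cdot s'=f\circ g_x=f\circ f'_x=f_y\circ f=s\cdot f$, and $s'\in\S$ by construction. The symmetric statement $f\cdot s=s''\cdot f$ is handled in the same way using the other equality $\sims<f>=\mathrm{Id}_y\circ f$: now only the summand with $\c(g_z)=\d(f)=x$ survives, we get $f\cdot s=f\circ f_x$, and (\ref{eq:423}) provides an $f''_y\in\mathrm{Id}_y$ with $f\circ f_x=f''_y\circ f$, so $s''$ is assembled with its $y$-component equal to $f''_y$ and $1_z$ elsewhere.

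For part (ii), the direction (a)$\Rightarrow$(b) is essentially by definition: any $f_x\in\mathrm{Id}_x$ satisfies $\s(f_x)=1_{x'}=\s(1_x)$ so $f_x\sims 1_x$, and the defining condition of $M_S(x)$ in Definition \ref{def:344} applied with $f=1_x$ and $g=f_x$ yields $S(f_x)(m_x)=S(1_x)(m_x)=m_x$. The reverse direction (b)$\Rightarrow$(a) is the place where (\ref{eq:423}) is used: given $f\in\Mor\D$ with $\d(f)=x$ and $g\in\sims<f>$, Lemma \ref{lemma:343}.(iii) tells us $\d(g)=x$ as well, and (\ref{eq:423}) gives $g=f\circ f_x$ for some $f_x\in\mathrm{Id}_x$. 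Functoriality of $S$ and the hypothesis then give
\[
S(g)(m_x)=S(f)\bigl(S(f_x)(m_x)\bigr)=S(f)(m_x),
\]
which is exactly the membership condition for $M_S(x)$.

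The only genuinely non-routine point is part (i): one needs to recognize that (\ref{eq:423}) is precisely the ``normality'' property required to rewrite $f_y\circ f$ as $f\circ f'_x$, and that extending $f'_x$ trivially (by identities) at the remaining objects produces an element of $\S$. Once that observation is in hand both parts are short calculations.
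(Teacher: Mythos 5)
Your proposal is correct and follows essentially the same route as the paper's proof: in part (i) you isolate the unique surviving summand of $s\cdot f$, apply (\ref{eq:423}) to rewrite $f_y\circ f$ as $f\circ f'_x$, and pad $f'_x$ with identities to build $s'\in\S$; in part (ii) you specialize the defining condition of $M_S(x)$ to $f=1_x$ for (a)$\Rightarrow$(b) and use (\ref{eq:423}) to factor $g=f\circ f_x$ for (b)$\Rightarrow$(a), exactly as the paper does.
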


\begin{proof}
(i) Let $f\in\Hom_{\D}(x,y)$ and $s\in\S$. Then $s=f_1+f_2+\ldots+f_n$, with $f_1\in\mathrm{Id}_{x_1}, f_2\in\Id_{x_2},\ldots,f_n\in\Id_{x_n}$, where $\obd=\{x_1,x_2,\ldots,x_n\}$. It follows that there is a unique $i_0\in\{1,2,\ldots,n\}$ such that $x_{i_0}=y$ and
\[s\cdot f=(f_1+\ldots+f_n)\cdot f=f_{i_0}\circ f\in\Id_y\circ f,\]
hence by (\ref{eq:423}) we obtain
\[s\cdot f=f_{i_0}\circ f=f\circ f'_{i_1},\quad i_1\in\{1,2,\ldots,n\},\]
where $f'_{i_1}\in\Id_x$ and $x=x_{i_1}.$ We consider
\[s':=1_{x_1}+\ldots+f'_{i_1}+\ldots+1_{x_n} \in\S \]
hence $f\cdot s'=f\circ f'_{i_1};$ similarly  $f\cdot s=s''\cdot f$.

\medskip
(ii) Recall that (a) is equivalent with the following statement: if $f_0\in\Mor\D$ such that $\d(f)=x$ then $S(f_0)(m_x)=S(f_1)(m_x)$ for any $f_1\in\sims<f_0>.$

\smallskip
``(a)$\Rightarrow$(b)'' Let $f_x\in\Id_x.$  It follows that $S(f_x)(m_x)=S(1_x)(m_x)$.

\smallskip
``(b)$\Rightarrow$(a)'' Let $f_0\in\Mor\D$ such that $\d(f_0)=x$ and $f_1\in\sims<f_0>$. By (\ref{eq:423}) we get that $f_1=f_0\circ f'_x$, where $f'_x\in\Id_x$. Then
\[S(f_1)(m_x) = S(f_0\circ f'_x)(m_x)=S(f_0)\left(S(f'_x)(m_x)\right)\overset{\text{(b)}}{=} S(f_0)(m_x).\]
\end{proof}
In the following example we present situations of functors and finite categories for which condition (\ref{eq:423}) is satisfied. The example described in statement b) assure us that there are cases of category algebras and functors for which Theorem \ref{thm:13} can be applied but we cannot apply the results of  \cite{art:CMT2017}.
\begin{example}
\begin{enumerate}[a)]
\item Let $\H$, $\G$ be two groupoids and $\s:\H\to\G$ be any injective on objects, surjective on morphisms covariant functor. Then (\ref{eq:423}) is true. If we consider posets viewed as categories then any injective  on objects, surjective on morphisms covariant functor between two posets becomes an isomorphism of posets and (\ref{eq:423}) is trivially true.
\item Let $\D, \C$ be the following categories with two objects. We consider $\obd=\{x,y\}, \obc=\{x',y'\},$ the morphisms $$\Mor \D =\{1_x,1_y,f_1,f_2\},\quad \Mor \C=\{1_{x'},1_{y'},f_1'\},$$
and the compositions 
\[
\begin{array}{lll}
f_2\circ f_1=f_1\circ 1_x=1_y\circ f_1=f_1,&\hspace{1cm}& f_2\circ f_2=1_y,\\[0.2cm]
f_2\circ 1_y=1_y\circ f_2=f_2,&& f_1'\circ 1_{x'}=1_{y'}\circ f_1'=f_1'.
\end{array}
\]
The functor $\s:\D\rightarrow \C,$  represented in the following diagram
\[
\xymatrix@C+=1cm{
&\D  & \ar@/^/@/^1pc/[rr]^{\s} & & &  \C \\
 \stackrel{x}{\bullet} \ar[rr]^{f_1}\ar@(dl,dr)_{1_x} & & \stackrel{y}{\bullet}  \ar@(dl,dr)_{1_y} \ar@(ul,ur)^{f_2}  & & \stackrel{x'}{\bullet} \ar[rr]^{f_1'}\ar@(dl,dr)_{1_{x'}} & & \stackrel{y'}{\bullet}  \ar@(dl,dr)_{1_{y'}} \\ 
& & & & &
}
\] is given by $\s(x)=x',s(y)=y'$ and
$$\s(f_1)=f_1',\qquad\s(1_x)=1_{x'},\qquad \s(f_2)=\s(1_y)=1_{y'}.$$

It is easy to verify that $\s$ is an injective on objects, surjective on morphisms covariant functor satisfying condition (\ref{eq:423}). Let $\Delta:k\D\rightarrow k\D\otimes k\D$ be the natural candidate for a co-multiplication, $\Delta(\sum_{f\in\Mor \D}\lambda_f f)=\sum_{f\in\Mor \D}\lambda_f f\otimes f$ and the twist map $\tau:k\D\otimes k\D\rightarrow k\D\otimes k\D$ defined on basis elements $\tau(f\otimes g)=g\otimes f,$ for any $f,g\in \Mor \D.$
There is no Hopf algebra structure on the category algebra $k\D$ with co-multiplication map $\Delta$ and twist map $\tau.$ By contradiction, if there is a Hopf algebra on $k\D,$ since $k\D$ is of dimension four, we apply \cite[Theorem 3.5, Table I]{art:Ste1999} to obtain that $k\D$ is isomorphic with either $kC_4, k(C_2\times C_2)$ or $T_4$ (the unique noncommutative and noncocommutative Hopf algebra of dimension four); see also \cite[Corollary 3]{art:Ste1997}. Clearly $k\D$ is noncommutative and, with the above $\Delta $ and $\tau$, it is cocommutative, see \cite[Proposition 2.2]{art:Xu2013}. 

Note that the above example suggests us to search the answer for the following question: which are the conditions for a fixed, given algebra (or coalgebra) $A$ to have a Hopf algebra structure?  By email discussions with Professor Gigel Militaru we found out that it seems that there is no systematic approach for this question and it is not an easy question.
\end{enumerate}
\end{example}

If $(C,\sigma)$ is an interior $A$-algebra we denote by ${}_{\sigma}C_{\sigma}$ the $(A,A)$-bimodule structure induced by $\sigma.$

\begin{definition}\label{def:44}
Let $(C,\sigma)$ be a  interior $k\D$-algebra. We  still denote by $_{\sigma}C_{\sigma}$ the  $(k\S,k\S)$-bimodule structure of $C$ obtained  by restriction to $k\S$ of the $(k\D,k\D)$-bimodule structure. Using Lemma \ref{lemma:421}.(iii) it follows that $k$ is an $(k\S,k\S)$-bimodule (through $\varepsilon$) that is denoted $_{\varepsilon}k_{\varepsilon}$. Therefore, we can consider the $(k\S,k\S)$-bimodule $_{\varepsilon}k\otimes_{k\S}C_{\sigma}$. The \textbf{surjective Puig's induction} from $\D$ to $\C$ of $C$ is
\[
\IndP^{\C}_{\s,\D}(C)=\left(_{\varepsilon}k\otimes_{k\S}C_{\sigma}\right)^{\S}.
\]
\end{definition}

\begin{proposition}\label{prop:45}
Let $\s:\D\to\C$ be an injective on objects, surjective on morphisms covariant functor. Let $(C,\sigma)$ be a $k\D$-interior algebra.
\begin{enumerate}[\normalfont (i)]
\item Then $\IndP^{\C}_{\s,\D}(C)$ is a $k$-algebra;
\item If $\mathrm{(\ref{eq:423})}$ is true then $\IndP^{\C}_{\s,\D}(C)$ is a $k\C$-interior algebra.
\end{enumerate}
\end{proposition}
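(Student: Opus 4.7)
My plan is to exploit the fact that the augmentation $\varepsilon:k\S\to k$ makes the left $\S$-action on the bimodule $M:={}_{\varepsilon}k\otimes_{k\S}C_{\sigma}$ trivial, so that $\S$-invariance reduces to right-$\S$-invariance, and to identify $M$ with the quotient $C/J$, where $J:=\sigma(\omega(k\S))C$ and $\omega(k\S)=\ker\varepsilon$ is the augmentation ideal of $k\S$. Under this identification, $\IndP^{\C}_{\s,\D}(C)$ is the set of classes $[c]\in C/J$ satisfying $c\sigma(s)-c\in J$ for every $s\in\S$.

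For part (i), I would define the multiplication on $\IndP^{\C}_{\s,\D}(C)$ by $[c_1][c_2]:=[c_1c_2]$. Well-definedness in the first factor is immediate from $JC\subseteq J$ (which says $J$ is a right ideal of $C$). Well-definedness in the second factor is where the invariance of $[c_1]$ is needed: since
\[c_1\sigma(s)-c_1\sigma(s')=(c_1\sigma(s)-c_1)-(c_1\sigma(s')-c_1)\in J\]
for all $s,s'\in\S$, and $\omega(k\S)$ is $k$-spanned by such differences, one obtains $c_1\sigma(\omega(k\S))\subseteq J$, hence $c_1J\subseteq J$. The same inclusion shows that the product class is again invariant. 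The unit is $[1_C]$, and associativity is inherited from $C$.

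For part (ii), I would construct $\sigma':k\C\to\IndP^{\C}_{\s,\D}(C)$ on basis morphisms by $\sigma'(g):=[\sigma(\tilde g)]$, where $\tilde g\in\Mor\D$ is any lift of $g$ along $\s$. To check independence of the lift, let $\tilde g_1,\tilde g_2$ be two lifts. By Lemma \ref{lemma:343}.(iii) they share a common source $x$ and target $y$, and (\ref{eq:423}) produces $v\in\mathrm{Id}_y$ with $\tilde g_1=v\circ\tilde g_2$. Completing $v$ to $t:=v+\sum_{y'\neq y}1_{y'}\in\S$ and using that $\{\sigma(1_{y'})\}_{y'\in\obd}$ is a complete orthogonal system of idempotents in $C$, one computes $\sigma(t)\sigma(\tilde g_2)=\sigma(\tilde g_1)$; hence $\sigma(\tilde g_1)-\sigma(\tilde g_2)=\sigma(t-1_{k\D})\sigma(\tilde g_2)\in J$, since $t-1_{k\D}\in\omega(k\S)$. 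The symmetric description $\sims<\tilde g>=\tilde g\circ\mathrm{Id}_x$ from (\ref{eq:423}) then shows the image of $\sigma'$ lies inside $\IndP^{\C}_{\s,\D}(C)$. Multiplicativity follows from the functoriality of $\s$ together with the fact that $\s$ is bijective on objects (Lemma \ref{assumptions:341}.(ii)), which lets us lift any composable pair in $\C$ to a composable pair in $\D$; unitality follows from $\sum_{\tilde y'\in\obd}\sigma(1_{\tilde y'})=\sigma(1_{k\D})=1_C$.

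The main obstacle is the well-definedness of $\sigma'$ in part (ii): this is the only place where (\ref{eq:423}) is genuinely required, and it is the one-sided factorisation $\sims<f>=\mathrm{Id}_y\circ f$ that permits any two lifts of a given morphism in $\C$ to be compared, modulo $J$, by a single element of $\S$. Everything else reduces to routine bookkeeping with the right ideal $J\subseteq C$.
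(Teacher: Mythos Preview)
Your proposal is correct and follows essentially the same line as the paper's proof: the paper defines the product on $\IndP^{\C}_{\s,\D}(C)$ as the ``pointwise'' product $(a_1\otimes c_1)(a_2\otimes c_2)=a_1a_2\otimes c_1c_2$ and, for (ii), the structural map $\tau(f')=1\otimes\sigma(f)$ for any lift $f$ of $f'$, checking well-definedness and $\S$-invariance via (\ref{eq:423}) and Lemma~\ref{lemma:422}.(i) exactly as you do. Your reformulation via the identification $\,{}_{\varepsilon}k\otimes_{k\S}C\cong C/J$ with $J=\sigma(\omega(k\S))C$ is a cosmetic but clarifying change: it makes the well-definedness of the product in the second variable (which the paper leaves implicit in ``pointwise multiplication'') completely transparent through the inclusion $c_1J\subseteq J$ for $\S$-invariant $c_1$.
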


\begin{proof}
(i) We use the pointwise multiplication. Let $a_1\otimes c_1,a_2\otimes c_2\in\IndP^{\C}_{\s,\D}(C)$. For any $s\in\S$, we have:
\[
\begin{array}{rclcl}
\left((a_1\otimes c_1)\cdot (a_2\otimes c_2)\right)\cdot s & = & (a_1a_2\otimes c_1c_2)s & = & a_1a_2\otimes c_1c_2\sigma(s)\\
& = & (a_1\otimes c_1)(a_2\otimes c_2\sigma(s)) & = & (a_1\otimes c_1)(a_2\otimes c_2)
\end{array}
\]
The identity is $1_{\IndP^{\C}_{\s,\D}(C)}=1\otimes 1_C,$ since for any $s\in\S$:
\[
\begin{array}{rclcl}
(1\otimes 1_C) \cdot s & = & 1\otimes 1_C\cdot \sigma(s) & = & 1\otimes \sigma(s)1_C\\
& = & 1\otimes_{\S}\sigma(s)1_C & = & 1\cdot\varepsilon(s)\otimes 1_C\\
& = & 1\otimes 1_C.
\end{array}
\]

\goodbreak
\medskip
(ii) Let $x,y\in\obd$ and the corresponding objects $x',y'\in\obc.$  Let \[\tau :k\C\to \left(_{\varepsilon}k\otimes_{k\S}C_{\sigma}\right)^{\S},\quad \tau(f')=1\otimes\sigma(f),\] for every $f'\in\Hom_{\C}(x',y');$ where $f\in\Hom_{\D}(x,y)$ such that $\s(f)=f',$ since $\s$ is surjective on morphisms. We show first that $\tau$ is well-defined. For this, let $f_1,f_2\in\Hom_{\D}(x,y)$, such that \[\s(f_1)=\s(f_2)=f'.\] hence $f_2\in\sims<f_1>$. By (\ref{eq:423}) it follows that there are $g_y\in\Id_y$, $g_x\in\Id_x$ such that \[f_2=g_y\circ f_1=f_1\circ g_x.\] Let $s_0\in\S$, where $s_0$ is the sum of all identity homomorphisms in $\D$, excepting position $y$, where we replace $1_y$ by $g_y$ (that is, if $\obd=\{x_1,\ldots,x_n\}$, then $s_0=1_{x_1}+1_{x_2}+\ldots+g_y+\ldots+1_{x_n}$). It follows $f_2=g_0\circ f_1=s_0\circ f_1$ and
\[
\begin{array}{rclcl}
1\otimes_{k\S}\sigma(f_2) & = & 1\otimes_{k\S}\sigma(s_0\cdot f_1) & = & 1\otimes_{k\S}\sigma(s_0)\sigma(f_1)\\[0.2cm]
& = & \varepsilon(s_0)\otimes_{k\S}\sigma(f_1) & = & 1\otimes_{k\S}\sigma(f_1).
\end{array}
\]
Moreover, for any $s\in\S$, by Lemma \ref{lemma:422}.(ii), there is $s''\in\S$ such that $f\cdot s=s''\cdot f$. We obtain:
\[
\begin{array}{rclcl}
\left(1\otimes_{k\S}\sigma(f)\right)\cdot s & = & 1\otimes_{k\S}\sigma(f\cdot s) & = & 1\otimes_{k\S}\sigma(s''\cdot f)\\[0.2cm]
& = & \varepsilon(s'')\otimes_{k\S}\sigma(f) & = & 1\otimes_{k\S}\sigma(f).
\end{array}
\]

We show next that $\tau$ is a homomorphism of $k$-algebras. For every $f'_1,f'_2\in\Mor\C$, we have:
\[
\begin{array}{rcl}
\tau(f'_1\cdot f'_2) & = & \left\{
\begin{array}{ll}
\tau(f'_1\circ f'_2), & \text{if } \d(f'_1)=\c(f'_2)\\[0.1cm]
0, & \text{if }\d(f'_1)\neq\c(f'_2)
\end{array}
\right.\\[0.5cm]
 & = & \left\{
\begin{array}{ll}
1\otimes\sigma(f_1\circ f_2), & \text{if } \d(f_1)=\c(f_2)\\[0.1cm]
0, & \text{if }\d(f_1)\neq\c(f_2)
\end{array}
\right.\\[0.5cm]
& = & \tau(f'_1)\cdot\tau(f'_2),
\end{array}
\]
and
\[
\tau\left(\sum_{x'\in\obc}1_{x'}\right) = 1\otimes \sigma\left(\sum_{x\in\obd}1_{x}\right)=1\otimes_{k\S}1_C.
\]
\end{proof}

\begin{proof}[Proof of Theorem \ref{thm:13}]
(i) We have that $k\D$ is $\D$-graded with
\[k\D=\bigoplus_{f\in\Mor\D}A_f,\qquad A_f=kf\]
and $S[\D]$ is $\D$-graded with
\[S[\D]=\bigoplus_{f\in\Mor\D}B_f,\qquad B_f=S_{\c(f)}f.\]
Clearly $\sigma(A_f)\subseteq B_f$, for every $f\in\Mor\D$ and
\[\sigma\left(\sum_{x\in\obd}1_x\right)=\sum_{x\in\obd}\sigma(1_x)=\sum_{x\in\obd}1_{S(x)}1_x.\]
Let $f,g\in\Mor\D$. We have:
\[
\begin{array}{rcl}
\sigma(g\cdot f) & = &
\left\{
\begin{array}{ll}
\sigma(g\circ f), & \text{if }\d(g)=\c(f)\\[0.1cm]
0, & \text{if }\d(g)\neq\c(f)
\end{array}
\right.\\[0.5cm]
& = &
\left\{
\begin{array}{ll}
1_{R(\c(g\circ f))}g\circ f, & \text{if }\d(g)=\c(f)\\[0.1cm]
0, & \text{if }\d(g)\neq\c(f);
\end{array}
\right.
\end{array}
\]
\[
\begin{array}{rcl}
\sigma(g)\ast\sigma(f) & = & \left(1_{S(\c(g))}g\right)\ast \left(1_{S(\c(f))}f\right)\\[0.5cm]
& = &
\left\{
\begin{array}{ll}
1_{S(\c(g))}S(g)\left(1_{S(\c(f))}\right)g\circ f, & \text{if }\d(g)=\c(f)\\[0.1cm]
0, & \text{if }\d(g)\neq\c(f)
\end{array}
\right.\\[0.5cm]
& = &
\left\{
\begin{array}{ll}
1_{S(\c(g))}g\circ f, & \text{if }\d(g)=\c(f)\\[0.1cm]
0, & \text{if }\d(g)\neq\c(f).
\end{array}
\right.
\end{array}
\]

\medskip
(ii) We denote by
\[
\psi:\left(\mathrm{IndT}^{\C}_{\s,\D}(S)\right)[\C]\to\left(k\otimes_{k\S}S[\D]\right)^{\S},\qquad
\psi(m_{\c(f)}f')=1\otimes m_{\c(f)}f,
\]
for any $m_{\c(f)}f'\in \left(\mathrm{IndT}^{\C}_{\s,\D}(S)\right)[\C]$.

First, we verify that $\psi$ is well defined. Let $f_1,f_2\in\Mor\D$ such that $\s(f_1)=\s(f_2)=f'$. It follows that $f_2\in\sims<f_1>$ hence, by (\ref{eq:423}) we obtain that there is $g_{\c(f_1)}\in\Id_{\c(f_1)}$ with $f_2=g_{\c(f_1)}\circ f_1$ and:
\[
\begin{array}{rcl}
1\otimes m_{\c(f_1)}f_2 & = & 1 \otimes  m_{\c(f_1)}g_{\c(f_1)}\circ f_1\\[0.1cm]
& = & 1 \otimes \left(1_{S(\c(f_1))}g_{\c(f_1)}\right)\ast\left(m_{\c(f_1)}f_1\right)\\[0.1cm]
& = & 1 \otimes   \sigma(r)\cdot m_{\c(f_1)} f_1\\[0.1cm]
& =& \varepsilon(r) \otimes  m_{\c(f_1)} f_1\\[0.1cm]
& = & 1 \otimes m_{\c(f_1)} f_1,
\end{array}
\]
where the second equality is true since $m_{\c(f_1)}\in M_S(c(f_1))$ and $$r:=1_{x_1}+\ldots+g_{\c(f_1)}+\ldots+1_{x_n}$$ (here $\c(f_1)\in\{x_1,\ldots,x_n\}$ which is the set $\obd$).

Next, we show that $1\otimes m_{\c(f)}f\in\left(k\otimes_{k\S}S[\D]\right)^{\S}$. Let $s\in\S$, hence $s=\sum_{x\in\obd}g_x$, $g_x\in\Id_x$, $x\in\obd$. Then
\[
\begin{array}{rcl}
\left(1\otimes m_{\c(f)}f\right)\cdot s & = & 1\otimes_{}m_{\c(f)}f\cdot\sigma(s)\\[0.2cm]
& = & 1\otimes \sum\limits_{x\in\obd}(m_{\c(f)}f)\ast(1_{S(x)}g_x)\\[0.3cm]
& = & 1\otimes_{} \sum\limits_{x\in\obd}m_{\c(f)}S(f)(1_{S(x)})f\cdot g_x\\[0.3cm]
& = & 1\otimes_{} m_{\c(f)}(f\circ g_{\d(f)})\\[0.1cm]
& = & 1\otimes_{} m_{\c(f)}\left(  g'_{\c(f)}\circ f\right)\\[0.1cm]
& = & 1\otimes_{} \left(1_{S(\c(f))}g'_{\c(f)}\right)\ast \left(m_{\c(f)}f\right)\\[0.1cm]
& = & 1 \otimes \sigma(s'') \cdot m_{\c(f)}f\\[0.1cm]
& = & 1\otimes m_{\c(f)}f,
\end{array}
\]
where the fourth equality is true by (\ref{eq:423}) and since $x=\mathrm{d}(f);$ the fifth equality is true since $m_{\c(f)}\in M_S(\c(f))$ and 
$$s'':=1_{x_1}+\ldots+g'_{\c(f)}+\ldots+1_{x_n},  \qquad \c(f)\in\{x_1,\ldots,x_n\}\text{ if }\obd=\{x_1,\ldots,x_n\}.$$

Clearly $\psi$ is a homomorphism of $k$-algebras, since:
\[
\begin{array}{rcl}
\psi((m_{\c(g)}g')\ast (m_{\c(f)}f')) & = &
\left\{
\begin{array}{ll}
\psi\left(m_{\c(g)}\mathrm{IndT}^{\C}_{\s,\D}(S)(g')(m_{\c(f)})g'\circ f'\right), & \text{if }\d(g')=\c(f')\\[0.1cm]
0, & \text{if }\d(g')\neq\c(f')
\end{array}
\right.\\[0.5cm]
& = &
\left\{
\begin{array}{ll}
1\otimes m_{\c(g)}S(g)(m_{\c(f)})g\circ f, & \text{if }\d(g')=\c(f')\\[0.1cm]
0, & \text{if }\d(g')\neq\c(f'),
\end{array}
\right.
\end{array}
\]
and
\[
\begin{array}{rcl}
\psi(m_{\c(g)}g')\cdot \psi(m_{\c(f)}f') & = &
(1\otimes m_{\c(g)}g)(1\otimes m_{\c(f)}f)\\[0.2cm]
& = &
\left\{
\begin{array}{ll}
1\otimes m_{\c(g)}S(g)(m_{\c(f)})g\circ f, & \text{if }\d(g)=\c(f)\\[0.1cm]
0, & \text{if }\d(g)\neq\c(f).
\end{array}
\right.
\end{array}
\]
Since $\s$ is bijective on objects, we know that $\d(g)=\c(f)$ if and only if $\d(g')=\c(f')$.

 For $f'\in \Mor(\C)$ the homogeneous $f'$-component of
 $\mathrm{IndT}^{\C}_{\s,\D}(S)[\C]$ is $M_S(\mathrm{c}(f'))f',$ and the homogeneous $f'$-component $(k\otimes_{k\S}S[\D])^{\S}_{f'}$ of $(k\otimes_{k\S}S[\D])^{\S}$ is 
 $$\left(k\otimes_{k\S}\bigoplus_{g\in \sims<f> }S[\D]_g\right)^{\S}.$$
It is clear that $\psi $ is a homomorphism of $\C$-graded algebras.

We define the map \[\theta:k\otimes_{k\S}S[\D]\to M_S\otimes k\C, \quad \theta(1\otimes m_{\c(f)}f)=m_{\c(f)}\otimes \s(f),\] for all $f\in\Mor\D$ and $m_{\c(f)}\in S(\c(f)).$

 Clearly $\theta$ is a homomorphism of right $ k\S$-modules, hence restricts on $\S$-fixed submodules; we explain this in the next lines. The opposite category $\D^{\op}$ has the same objects as $\D,$ but $f^{\op}\in \Hom_{\D^{\op}}(y,x)$ if and only if $f\in \Hom_{\D}(x,y)$ and, the composition is given by $f^{\op}\circ g^{\op}=(g\circ f)^{\op},$ for any $x,y\in\obd, f,g\in \Mor\D$ with $\d(g)=\c(f).$ Now $ S:\D^{\op}\rightarrow k\textrm{-}\Alg,$ defined by  $S(f^{\op})=S(f),$ for any $f^{\op}\in\Mor \D^{\op}$ is a contravariant functor. Let $f\in \Mor \D, f'\in\Mor\C, m_{\c(f)}\in S(\c(f))$ and $s=\sum_{c\in \obd}g_x,$ with $g_x\in\Id_x$ for any $x\in\obd.$ The right action of $k\S$ on $k\otimes_{k\C}S[\D]$ is given by 
 \[(1\otimes m_{\c(f)}f)\cdot s=1\otimes S(g_{\c(f)}^{\op})(m_{\c(f)})f\circ g_{\d(f)},\]
see Definition \ref{def:44}. Finally, the right action of $k\S$ on $M_S\otimes k\C$ is given by
\[(m_x\otimes f')\cdot s=S(g_x^{\op})(m_x)\otimes f'\circ \s(g_{\d(f)}),\]
where $m_x\in S(x),$ for any $x\in \obd$ and,  with this right $k\S$-module structure, note that

\[\mathrm{IndT}_{\s,\D}^{\C}(S)[\C]=(M_S\otimes k\C)^{\S}.\]
\end{proof}
\begin{remark} Let $\f:\D\rightarrow \C$ be an injective on objects covariant functor. Then  $\f(\D)$ is a subcategory of $\C$ and we have a decomposition:
\[
\xymatrix@C+=2cm{
\D\ar[r]^{\s} & \f(\D) \ar@{^{(}->}[r]^{\i}& \C,}
\]
where $\i$ is the inclusion functor (hence injective on morphisms) and $\s(x)=\f(x),$ where $f(x)\in\Ob\f(\D),$ for all $x\in\obd.$ On morphisms we have $\s(f)=\f(f)$, for all $f\in\Hom_{\D}(x,y),$ hence $\s$ is bijective on objects and surjective on morphisms. 
Our final goal was initially to define Turull's and Puig's induction for any such functor $\f$ and to obtain a similar theorem as Theorem \ref{thm:13} for $\f$. But, for the moment there are too many technical issues for defining  an injective on morphisms Turull's induction of the form $
\Ind^{\C}_{\i,\f(\D)}$ and then to generalize as $\Ind^{\C}_{\f,\D}=\Ind^{\C}_{\i,\f(\D)}\circ \Ind^{\f(\D)}_{\s,\D}.$
\end{remark}

\phantomsection

\end{document}